\newcommand{\yu}[1]{{{#1}}}
\newcommand{\new}[1]{{#1}}
\begin{document}
\title{Energy Stable Second Order Linear Schemes for the
  Allen-Cahn Phase-Field Equation\thanks{Feb 14, 2018, and
    accepted date (The correct dates will be entered by the
    editor).}}

\author{
  Lin Wang\thanks{LSEC, Institute of Computational
    Mathematics and Scientific/Engineering Computing,
    Academy of Mathematics and Systems Science, Beijing
    100190, China; School of Mathematical Sciences,
    University of Chinese Academy of Sciences, Beijing
    100049, China (wanglin@lsec.cc.ac.cn).}
  \and Haijun Yu \thanks{NCMIS \& LSEC, Institute of
    Computational Mathematics and Scientific/Engineering
    Computing, Academy of Mathematics and Systems Science,
    Beijing 100190, China; School of Mathematical Sciences,
    University of Chinese Academy of Sciences, Beijing
    100049, China (hyu@lsec.cc.ac.cn).}
}

\pagestyle{myheadings} \markboth{Energy Stable Second Order
  Linear Schemes for Allen-Cahn Equation}{L. Wang and H. Yu}

\maketitle

\begin{abstract}

  Phase-field model is a powerful mathematical tool to
  study the dynamics of interface and morphology changes
  in fluid mechanics and material sciences. However,
  numerically solving a phase field model for a real problem
  is a challenge task due to the non-convexity of the bulk
  energy and the small interface thickness parameter in the
  equation. In this paper, we propose two stabilized second
  order semi-implicit linear schemes for the Allen-Cahn
  phase-field equation based on backward differentiation
  formula and Crank-Nicolson method, respectively. In both
  schemes, the nonlinear bulk force is treated explicitly
  with two second-order stabilization terms, which make the
  schemes unconditional energy stable and numerically
  efficient.  By using a known result of the spectrum estimate
  of the linearized Allen-Cahn operator and some regularity estimates of the exact
  solution, we obtain an
  optimal second order convergence in time with a prefactor
  depending on the inverse of the characteristic interface
  thickness only in some lower polynomial order. Both
  2-dimensional and 3-dimensional numerical results are
  presented to verify the accuracy and efficiency of
  proposed schemes.

\end{abstract}

\begin{keywords} Allen-Cahn equation;
  energy stable; stabilized semi-implicit scheme; second
  order scheme; error estimate
\end{keywords}

\begin{AMS} 65M12; 65M15; 65P40
\end{AMS}

\section{Introduction}\label{intro}

In this paper, we consider numerical approximation for the
Allen-Cahn equation with Neumann boundary condition
\begin{alignat}{2}
\label{eq:AC}
    &\phi_{t} =\gamma\Big{(}\varepsilon \Delta \phi - \dfrac{1}{\varepsilon}f(\phi)\Big{)},
      \quad&(x,t)\in \Omega \times (0,T], \\
    &\partial_{n}\phi =0, \quad &(x,t)\in \partial\Omega\times(0,T]. \label{eq:CH:nbc}
\end{alignat}
Here $\Omega \in R^{d}, d=2,3$ is a bounded domain with a
locally Lipschitz boundary, $n$ is the outward normal, $T$
is a given time, $\phi(x,t)$ is the phase-field variable.
$f(\phi)$, the bulk force, is the derivative of a given
energy function $F(\phi)$, which is usually non-convex with
two or more than two local minima. One commonly used energy
function for two-phase problem is the double-well potential
$F(\phi)=\frac{1}{4}(\phi^2-1)^2$.
$\varepsilon$ is the thickness of the interface between two
phases.  $\gamma$, called mobility, is related to the
characteristic relaxation time of the system.  The
homogeneous Neumann boundary condition implies that no mass
loss occurs across the boundary walls.  The equation
\eqref{eq:AC} is introduced by Allen and Cahn
\cite{allen_microscopic_1979} to describe the process of
phase separation in multi-component alloy systems.  It can
be regarded as the $L^2$ gradient flow with respect to the
Ginzburg-Landau energy functional
\begin{equation}\label{eq:AC:E}
  E_{\varepsilon}(\phi):=\int_{\Omega}\Big(\frac{\varepsilon}{2}|\nabla \phi|^{2}
  +\frac{1}{\varepsilon} F(\phi)\Big)dx.
\end{equation}
The corresponding energy dissipation is given as
\begin{equation}\label{eq:AC:E1}
  \frac{d}{dt} E_{\varepsilon}(\phi)=-\frac{1}{\gamma}\int_{\Omega}\|\phi_t\|^2dx \leq 0.
\end{equation}
Another popular phase field model is the Cahn-Hilliard equation, which is the
the $H^{-1}$ gradient flow with respect to the Ginzburg-Landau
energy functional. 
It was
originally introduced by Cahn and
Hilliard~\cite{cahn_free_1958} to describe the phase
separation and coarsening phenomena in non-uniform systems
such as alloys, glasses and polymer mixtures.  

The Allen-Cahn equation and the Cahn-Hilliard equation are
widely used in modeling many interface problems due to their
good mathematical properties
(cf. e.g. \cite{elliott_cahn-hilliard_1989,
  chen_spectrum_1994, elliott_cahnhilliard_1996,
  elder_modeling_2004, yue_diffuse-interface_2004,
  wu_stabilized_2014} ). However, the small parameter
$\varepsilon$ and the non-convexity of energy function $F$
make the numerical approximation of a phase field equation a
challenging task, especially the design of time marching
schemes.
It is well-known that if a fully explicit or implicit time
marching scheme is used, a tiny time step-size is required
for the semi-discretized scheme to be stable or uniquely solvable
since the nonlinear function $F$ is neither convex nor
concave.
A very popular approach to obtain unconditional stable time
marching schemes is the so called convex splitting method
which appears to be introduced by Elliott and
Stuart~\cite{elliott_global_1993}, and popularized by
Eyre~\cite{eyre_unconditionally_1998}, in which, the convex
part of $F(\phi)$ is treated implicitly and the concave part
of $F(\phi)$ is treated explicitly.  This method has been
applied to various gradient flows (see
e.g. \cite{baskaran_energy_2013, shen_numerical_2010, feng_analysis_2015, feng_analysis_2016, feng_uniquely_2018}). 
\yu{Traditional convex splitting schemes are first order accurate.
	Recently, several extensions to second order schemes were proposed
	based on either the Crank-Nicolson scheme (see e.g.\cite{baskaran_energy_2013, chen_linear_2014, guo_h2_2016, diegel_stability_2016, cheng_second-order_2016, li_second-order_2017}), 
	or second order backward differentiation formula (BDF2) \cite{yan_second-order_2018, li_second_2018}.
In all convex splitting schemes, no matter first order or second order, one usually obtains an uniquely solvable nonlinear convex problem at each time step.}


There are another types of second order unconditional stable
schemes for the phase field equations. In
\cite{du_numerical_1991}, Du and Nicolaides proposed a
secant-line method which is energy stable and second order
accurate. It is used and extended in several other works,
e.g. \cite{furihata_stable_2001, feng_fully_2006,
  condette_spectral_2011,gomez_provably_2011,
  baskaran_energy_2013,zhang_adaptive_2013,
  benesova_implicit_2014}.  Similar to the convex splitting
method, the secant-line method leads to nonlinear
semi-discretized system, which need special efforts to
solve.  Recently, an augmented Lagrange multiplier(ALM)
method was proposed in
\cite{guillen-gonzalez_linear_2013,guillen-gonzalez_second_2014}
to get second order linear energy stable schemes.  The idea
is generalized as invariant energy quadratization (IEQ) by
Yang et al. and successfully applied to handle several very
complicated nonlinear phase-field models (see
e.g. \cite{yang_linear_2016, han_numerical_2017,
  yang_efficient_2017,yang_yu_efficient_2017}). Based on
similar methodology, a new variant called scalar auxiliary
variable (SAV) method is developed by Shen et al.
\cite{shen_efficient_2017, shen_SAV_2017}. In the ALM and
IEQ approach, nonlinear semi-discretized systems are
avoided, but one has to solve variable-coefficient systems,
while in the SAV scheme, one only need to solve some linear
systems with constant coefficients. Different to other
methods, the energy in ALM, IEQ and SAV approach is a
modified one which also depends on the auxiliary variable.

In this study, we focus on numerical methods that based on semi-implicit 
discretization 
and
stabilization skill. To improve the numerical stability of
solving phase-field equations, semi-implicit schemes were proposed by Chen and Shen\cite{chen_applications_1998}
and Zhu et al.\cite{zhu_coarsening_1999}.  Although not
unconditionally stable, semi-implicit schemes allow much
larger time step-sizes than explicit schemes.  To further
improve the stability, Xu and Tang proposed stabilized
semi-implicit methods for epitaxial growth model in
\cite{xu_stability_2006}. The proposed schemes have
extraordinary numerical stability even though the
mathematical proof of the stability is not complete. Similar
schemes was developed for phase field equation by He et
al.\cite{he_large_2007} and Shen and
Yang\cite{shen_numerical_2010}, where the latter one adopted
a mixed form for the Cahn-Hilliard equation, by using a
truncated double well potential such that the assumption
$\|f'(\phi)\|_{\infty} \leq L$ is satisfied, the
unconditional energy stability was proved for the first order
stabilized scheme.
\yu{It is worth to mention that with no truncation made to
$f(\phi)$, Li et al \cite{li_characterizing_2016,
  li_second_2017} proved that the energy stable property can
be obtained as well, but a much larger stability constant
need be used.  }

In this paper, we develop two second-order unconditionally
energy stable linear schemes for the Allen-Cahn equation
based on the schemes proposed in \cite{xu_stability_2006}
and \cite{shen_numerical_2010}. The energy dissipation is
guaranteed by including two second order stabilization
terms, the first one is directly from
\cite{xu_stability_2006}, the other one is inspired by the
work \cite{wu_stabilized_2014}. We also carry out an optimal error
estimate for the time semi-discretized schemes.  For the
phase field equations, the error bounds will depend on the
factor of $1/\varepsilon$ exponentially if one uses a
standard procedure. By using a spectrum estimate result of
de Mottoni and Schatzman \cite{de_mottoni_evolution_1989,
  de_mottoni_geometrical_1995} and
Chen\cite{chen_spectrum_1994} for the linearized Allen-Cahn
operator, we are able to get an optimal error estimate with
a prefactor depend on $1/\varepsilon$ only in some lower
polynomial order for small $\varepsilon$.  This spectrum
estimate argument was first used by Feng and Prohl
\cite{feng_numerical_2003, feng_error_2004} for an implicit
first order scheme for phase field equations.  It was also
applied by Kessler et al.~\cite{kessler_posteriori_2004} to
derive a posteriori error estimate for adaptive time
marching.  Similar analysis for a first-order stabilized
semi-implicit scheme of the Allen-Cahn equation in given by
Yang~\cite{yang_error_2009}. \yu{Recently, Feng and Li \cite{feng_analysis_2015}
, Feng et al.~\cite{feng_analysis_2016} extended this spectrum estimate argument 
 to first order convex splitting scheme coupled with interior penalty discontinuous Galerkin spatial discretization for Allen-Cahn and Cahn-Hilliard equation, respectively.} To our best knowledge, our
analysis is the first such result for second order linear
schemes.  In summary, the proposed methods have several
merits: 1) They are second order accurate; 2) They lead to
linear systems with constant coefficients after time
discretization; 3) The stability and error analysis bases on
weak formulations, so both finite element method and
spectral method can be used for spatial discretization to
satisfy discretized energy dissipation law. 4) The methods can be easily used in more complicated systems. Note that,
similar approach can be extended to the Cahn-Hilliard
equation \cite{WangYu2017b, wang_two_2017}, where Lipschitz
condition of $f$ is assumed based on physical intuition and
the analyses are more tedious.


The remain parts of the paper is organized as follows. In
Section 2, we present the two second-order stabilized
schemes for the Allen-Cahn equation and prove they are
energy stable.  The error estimate to derive a convergence
rate that does not depend on $1/\varepsilon$ exponentially
is then constructed in Section 3.  Detailed implementation
and numerical experiments for problems in both 2-dimensional
and 3-dimensional tensor-product domain are presented in
Section 4 to verify our theoretical results.  We end the
paper with some conclusions in Section 5.

\section{The two second order stabilized linear
  schemes}\label{Sec.2}

We first introduce some notations which will be used
throughout the paper. We use $\|\cdot\|_{m,p}$ to denote the
standard norm of the Sobolev space $W^{m,p}(\Omega)$. In
particular, we use $\|\cdot\|_{L^p}$ to denote the norm of
$W^{0,p}(\Omega)=L^{p}(\Omega)$; $\|\cdot\|_{{m}}$ to denote
the norm of $W^{m,2}(\Omega)=H^{m}(\Omega)$; and $\|\cdot\|$
to denote the norm of $W^{0,2}(\Omega)=L^{2}(\Omega)$.  Let
$(\cdot, \cdot)$ represent the $L^{2}$ inner product. For $p\ge 0$, we define
$H_0^p(\Omega) := \{\,u\in H^p(\Omega)\,|\, (u,1)=0 \,\}$, and 
denote $L_{0}^{2}(\Omega):= H_{0}^{0}(\Omega)$.

For any given function $\phi(t)$ of $t$, we use $\phi^n$ to
denote an approximation of $\phi(n\tau)$, where $\tau$ is
the step-size. We will frequently use the shorthand
notations: $\delta_{t}\phi^{n+1}:=\phi^{n+1}-\phi^{n}$,
$\delta_{tt}\phi^{n+1}:=\phi^{n+1}-2\phi^{n}+\phi^{n-1}$,
$D_{\tau}\phi^{n+1}:=
\frac{3\phi^{n+1}-4\phi^{n}+\phi^{n-1}}{2\tau}
=\frac{1}{\tau}\delta_{t}\phi^{n+1}+\frac{1}{2\tau}\delta_{tt}\phi^{n+1}
$,
$\hat{\phi}^{n+\frac{1}{2}}
:=\frac{3}{2}\phi^{n}-\frac{1}{2}\phi^{n-1}$
and $\hat{\phi}^{n+1} :=2\phi^{n}-\phi^{n-1}$. Following
identities will be used frequently as well
\begin{align}\label{eq:ID:1}
2(h^{n+1}-h^n, h^{n+1}) =& \|h^{n+1}\|^2 - \|h^n\|^2 + \|h^{n+1}-h^n\|^2, \\
\label{eq:ID:2}
\begin{split}
(D_\tau h^{n+1}, h^{n+1}) =& \frac{1}{4\tau}(\|h^{n+1}\|^2 -\|h^{n}\|^2 \\ 
&\quad +\|2h^{n+1}-\!h^n\|^2
\!-\!\|2h^{n}\!-\!h^{n-1}\|^2
+\|\delta_{tt}h^{n+1}\|^2).
\end{split}
\end{align}
To prove energy stability of the numerical schemes, we
assume that the derivative of $f$ in equation \eqref{eq:AC}
is uniformly bounded, i.e.
\begin{equation}\label{eq:Lip}
\max_{\phi\in\mathbf{R}} | f'(\phi) | \le L,
\end{equation}
where $L$ is a non-negative constant. 

\begin{remark} \label{rmk:2.1} Note that the commonly
used double well potential does not satisfy the above
assumption. But, thanks to the maximum principle that the
Allen-Cahn equation
has~(cf. e.g. \cite{allen_microscopic_1979,
  chen_spectrum_1994, feng_numerical_2003,
  yang_error_2009}), the solution to equation \eqref{eq:AC}
is bounded by value $-1$ and $1$ if the initial
condition is bounded by $-1$ and $1$.  So it is safe to
modify the double-well energy $F(\phi)$ for $|\phi|$ larger
than $1$ to be quadratic growth without affecting the exact
solution if the initial condition is bounded by $-1$ and
$1$, such that assumption \eqref{eq:Lip} is satisfied.
This argument also applies to the assumption \eqref{eq:Lip2}
 in next section.
\end{remark}

\subsection{The stabilized linear BDF2 scheme.}\label{Subsec.1}
Suppose $\phi^0=\phi_0(\cdot)$ and
$\phi^1\approx \phi(\cdot,\tau)$ are given, our stabilized
linear BDF2 scheme (SL-BDF2) calculate
$\phi^{n+1}, n=1,2,\ldots,N=T/\tau-1$ iteratively, using
\begin{equation}\label{BDF1}
\frac{3\phi^{n+1}-4\phi^{n}+\phi^{n-1}}{2\tau\gamma }
=\varepsilon \Delta \phi^{n+1}
 -\frac{1}{\varepsilon}f(2\phi^{n}-\phi^{n-1})
  -A\tau \delta_{t}\phi^{n+1}
  -B\delta_{tt}\phi^{n+1},
\end{equation}
where $A$ and $B$ are two non-negative constants to
stabilize the scheme. 

\begin{thm}\label{thm1}
Assume that \eqref{eq:Lip} is satisfied. Under the condition
  \begin{equation}\label{eq:BDF:ABcond}
     A \geq \frac{L}{2 \varepsilon \tau}-\frac{1}{\tau^2 \gamma},
     \quad
    B\geq \frac{L}{ \varepsilon} - \frac{1}{2\tau\gamma},
  \end{equation}
 the following energy dissipation law
  \begin{multline}\label{eq:BDF:Edis}
      E_{B}^{n+1} \leq E_{B}^{n}
      -\frac{\varepsilon}{2}\|\nabla\delta_{t}\phi^{n+1}\|^{2}
      -\left(\frac{1}{\tau \gamma} +A\tau -\frac{L}{2 \varepsilon}\right)\|\delta_{t}\phi^{n+1}\|^{2}\\
      -\left(\frac{1}{4\tau\gamma} + \frac{B}{2} -\frac{L}{2 \varepsilon}\right)\|\delta_{tt}\phi^{n+1}\|^{2},
      \quad \forall\: n\geq1,
  \end{multline}
  holds for the scheme \eqref{BDF1}, where
  \begin{equation}\label{eq:BDF:E}
    E_{B}^{n+1}=E_{\varepsilon}(\phi^{n+1})
    +\left(\frac{1}{4\tau\gamma}+\frac{L}{2\varepsilon}+\frac{B}{2}\right)
    \|\delta_{t}\phi^{n+1}\|^{2}.
  \end{equation}
\end{thm}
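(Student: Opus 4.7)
My plan is to test the scheme against $\delta_{t}\phi^{n+1}=\phi^{n+1}-\phi^{n}$ in $L^{2}(\Omega)$ and then manipulate each term so that the quantity $E_{\varepsilon}(\phi^{n+1})-E_{\varepsilon}(\phi^{n})$ emerges on the left while the remaining contributions are either absorbed into the modified energy $E_{B}^{n+1}$ or left as dissipation terms on the right.

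First, I would rewrite the BDF2 numerator using $3\phi^{n+1}-4\phi^{n}+\phi^{n-1}=2\delta_{t}\phi^{n+1}+\delta_{tt}\phi^{n+1}$, so after pairing with $\delta_{t}\phi^{n+1}$ the left-hand side becomes
$$\frac{1}{\tau\gamma}\|\delta_{t}\phi^{n+1}\|^{2}+\frac{1}{2\tau\gamma}(\delta_{tt}\phi^{n+1},\delta_{t}\phi^{n+1}).$$
The mixed term is handled by identity \eqref{eq:ID:1} applied to $h^{k}=\delta_{t}\phi^{k}$, giving $\frac{1}{4\tau\gamma}(\|\delta_{t}\phi^{n+1}\|^{2}-\|\delta_{t}\phi^{n}\|^{2}+\|\delta_{tt}\phi^{n+1}\|^{2})$. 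The Laplacian term, integrated by parts against the Neumann boundary condition and massaged with the same identity, yields $-\frac{\varepsilon}{2}(\|\nabla\phi^{n+1}\|^{2}-\|\nabla\phi^{n}\|^{2}+\|\nabla\delta_{t}\phi^{n+1}\|^{2})$. The $A$-stabilization gives $-A\tau\|\delta_{t}\phi^{n+1}\|^{2}$ directly, and the $B$-stabilization, by the same identity, contributes $-\frac{B}{2}(\|\delta_{t}\phi^{n+1}\|^{2}-\|\delta_{t}\phi^{n}\|^{2}+\|\delta_{tt}\phi^{n+1}\|^{2})$.

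The critical step is the explicit nonlinear term. Noting that $\hat\phi^{n+1}=2\phi^{n}-\phi^{n-1}$ satisfies $\phi^{n+1}-\hat\phi^{n+1}=\delta_{tt}\phi^{n+1}$ and $\phi^{n}-\hat\phi^{n+1}=-\delta_{t}\phi^{n}$, I expand $F(\phi^{n+1})$ and $F(\phi^{n})$ as Taylor series \emph{around} $\hat\phi^{n+1}$ (rather than around $\phi^{n}$) with integral or Lagrange remainder to obtain, pointwise,
$$F(\phi^{n+1})-F(\phi^{n}) = f(\hat\phi^{n+1})\,\delta_{t}\phi^{n+1}+\tfrac{1}{2}f'(\xi_{1})(\delta_{tt}\phi^{n+1})^{2}-\tfrac{1}{2}f'(\xi_{2})(\delta_{t}\phi^{n})^{2}.$$
Integrating over $\Omega$ and invoking \eqref{eq:Lip} gives
$$-\tfrac{1}{\varepsilon}\bigl(f(\hat\phi^{n+1}),\delta_{t}\phi^{n+1}\bigr)\le -\tfrac{1}{\varepsilon}\int_{\Omega}\!\!\bigl(F(\phi^{n+1})-F(\phi^{n})\bigr)dx+\tfrac{L}{2\varepsilon}\|\delta_{tt}\phi^{n+1}\|^{2}+\tfrac{L}{2\varepsilon}\|\delta_{t}\phi^{n}\|^{2}.$$
This is exactly the right decomposition: the remainder on $\delta_{tt}\phi^{n+1}$ will be absorbed by $B$, while the remainder on $\delta_{t}\phi^{n}$ is precisely what the modified-energy term $\frac{L}{2\varepsilon}\|\delta_{t}\phi^{n+1}\|^{2}$ in $E_{B}^{n}$ is designed to account for.

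Finally I collect. The $\|\nabla\phi^{k}\|^{2}$ and $F(\phi^{k})$ pieces combine to give $E_{\varepsilon}(\phi^{n+1})-E_{\varepsilon}(\phi^{n})$; transferring the three half-telescoping contributions $\frac{1}{4\tau\gamma}$, $\frac{L}{2\varepsilon}$, and $\frac{B}{2}$ on $\|\delta_{t}\phi^{n+1}\|^{2}-\|\delta_{t}\phi^{n}\|^{2}$ to the left reproduces $E_{B}^{n+1}-E_{B}^{n}$ with an exact cancellation of the $\|\delta_{t}\phi^{n}\|^{2}$ coefficients. A short bookkeeping check shows that the surviving coefficients of $\|\delta_{t}\phi^{n+1}\|^{2}$ and $\|\delta_{tt}\phi^{n+1}\|^{2}$ on the right are exactly $-(\frac{1}{\tau\gamma}+A\tau-\frac{L}{2\varepsilon})$ and $-(\frac{1}{4\tau\gamma}+\frac{B}{2}-\frac{L}{2\varepsilon})$, together with the already-nonpositive $-\frac{\varepsilon}{2}\|\nabla\delta_{t}\phi^{n+1}\|^{2}$. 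The hypothesis \eqref{eq:BDF:ABcond} is precisely what makes both coefficients nonpositive, yielding \eqref{eq:BDF:Edis}. I expect the only real subtlety to be the choice of Taylor expansion point: expanding around $\hat\phi^{n+1}$ rather than around $\phi^{n}$ or $\phi^{n+1}$ is what splits the nonlinear error into one piece controlled by $B$ (via $\|\delta_{tt}\phi^{n+1}\|^{2}$) and one piece cleanly absorbed by the extra $\frac{L}{2\varepsilon}\|\delta_{t}\phi^{n+1}\|^{2}$ in the modified energy; everything else is routine.
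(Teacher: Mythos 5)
Your proposal is correct and follows essentially the same route as the paper's proof: pair the scheme with $\delta_t\phi^{n+1}$, use the identity \eqref{eq:ID:1} on the BDF2, Laplacian and $B$-stabilization terms, and expand $F(\phi^{n+1})$ and $F(\phi^n)$ around $\hat\phi^{n+1}=2\phi^n-\phi^{n-1}$ so that the nonlinear remainder splits into $\tfrac{L}{2\varepsilon}\|\delta_{tt}\phi^{n+1}\|^2$ (absorbed by $B$) and $\tfrac{L}{2\varepsilon}\|\delta_t\phi^{n}\|^2$ (absorbed by the extra term in $E_B$). The bookkeeping you describe is exactly the paper's equation \eqref{cs12}.
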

\begin{proof}
 Pairing (\ref{BDF1}) with $\delta_t\phi^{n+1}$, we get
  \begin{equation}\label{cs7}
    \begin{split}
      \left(\frac{1}{\gamma}D_\tau\phi^{n+1},\delta_t\phi^{n+1}\right)
      = & \varepsilon (\Delta \phi^{n+1},\delta_t\phi^{n+1})
      -\frac{1}{\varepsilon}(f(\hat{\phi}^{n+1}),\delta_t\phi^{n+1})\\
      & - A\tau \| \delta_{t}\phi^{n+1} \|^2
      -B(\delta_{tt}\phi^{n+1},\delta_t\phi^{n+1}).
    \end{split}
  \end{equation}
  By integration by parts, following identities hold
  \begin{align}\label{cs8}
    \begin{split}
      \left(\frac1\gamma D_\tau\phi^{n+1},\delta_t\phi^{n+1}\right) 
      = &\frac{1}{\tau\gamma}\|\delta_{t}\phi^{n+1}\|^{2} \\
      &+\frac{1}{4\tau\gamma}\left(\|\delta_{t}\phi^{n+1}\|^{2}-\|\delta_{t}\phi^{n}\|^{2}
      +\|\delta_{tt}\phi^{n+1}\|^{2}\right),
   \end{split}\\
   \label{cs9}
      \varepsilon (\Delta \phi^{n+1},\delta_t\phi^{n+1})
      =& -\frac{\varepsilon}{2}(\|\nabla\phi^{n+1}\|^{2}-\|\nabla\phi^{n}\|^{2}
      +\|\nabla\delta_{t}\phi^{n+1}\|^{2}),\\    
    \label{cs11-1}
    -B(\delta_{tt}\phi^{n+1},\delta_t\phi^{n+1})
    =&-\frac{B}{2}\|\delta_{t}\phi^{n+1}\|^{2}+\frac{B}{2}\|\delta_{t}\phi^{n}\|^{2}
    -\frac{B}{2}\|\delta_{tt}\phi^{n+1}\|^{2}.
  \end{align}
  To handle the term involves $f$ in \eqref{cs7}, we expand
  $F(\phi^{n+1})$ and $F(\phi^n)$ at $\hat{\phi}^{n+1}$ as
  \begin{align*}\nonumber
    F(\phi^{n+1})&=F(\hat{\phi}^{n+1})+f(\hat{\phi}^{n+1})(\phi^{n+1}-\hat{\phi}^{n+1})+\frac{1}{2}f'(\zeta^{n}_{1})(\phi^{n+1}-\hat{\phi}^{n+1})^{2},\\
    F(\phi^{n})&=F(\hat{\phi}^{n+1})+f(\hat{\phi}^{n+1})(\phi^{n}-\hat{\phi}^{n+1})+\frac{1}{2}f'(\zeta^{n}_{2})(\phi^{n}-\hat{\phi}^{n+1})^{2},
  \end{align*}
  where $\zeta^{n}_1$ is a number between $\phi^{n+1}$ and
  $\hat{\phi}^{n+1}$, $\zeta^{n}_2$ is a number between
  $\phi^n$ and $\hat{\phi}^{n+1}$.  Taking the difference of
  above two equations, using the fact
  $\phi^{n+1}-\hat{\phi}^{n+1}=\delta_{tt}\phi^{n+1}$ and
  $\phi^{n}-\hat{\phi}^{n+1} = -\delta_t\phi^{n}$, we obtain
  \begin{equation}\label{bta4}
    \begin{split}
      F(\phi^{n+1})-F(\phi^{n})
      -f(\hat{\phi}^{n+1})\delta_t\phi^{n+1}
      ={}&
      \frac{1}{2}f'(\zeta^{n}_{1})(\delta_{tt}\phi^{n+1})^{2}
      -\frac{1}{2}f'(\zeta^{n}_{2})(\delta_{t}\phi^{n})^{2}\\
      \le{} &
      \frac{L}{2}|\delta_{tt}\phi^{n+1}|^2
      +\frac{L}{2}|\delta_{t}\phi^{n}|^2.
    \end{split}
  \end{equation}

  Taking inner product of the above equation with constant
  $1/\varepsilon$, then combining the result with
  (\ref{cs7}), (\ref{cs8}), (\ref{cs9}) and (\ref{cs11-1}), we obtain
  \begin{equation}\label{cs12}
    \begin{split}
      &\frac{1}{\varepsilon}(F(\phi^{n+1})-F(\phi^{n}),1)
      +\frac{\varepsilon}{2}(\|\nabla\phi^{n+1}\|^{2}-\|\nabla\phi^{n}\|^{2})\\
      &+\frac{1}{4\tau\gamma}(\|\delta_{t}\phi^{n+1}\|^{2}-\|\delta_{t}\phi^{n}\|^{2})
      +\Big(\frac{L}{2\varepsilon}+\frac{B}{2}\Big)(\|\delta_{t}\phi^{n+1}\|^{2}-\|\delta_{t}\phi^{n}\|^{2})\\
      &\leq
      -\frac{1}{4\tau\gamma}\|\delta_{tt}\phi^{n+1}\|^{2}
      -\frac{1}{\tau\gamma}\|\delta_{t}\phi^{n+1}\|^{2}
      -\frac{\varepsilon}{2}\|\nabla\delta_{t}\phi^{n+1}\|^{2}
      -A\tau\|\delta_{t}\phi^{n+1}\|^{2}\\
      &\quad+\frac{L}{2\varepsilon}\|\delta_{t}\phi^{n+1}\|^{2}
      -\frac{B}{2}\|\delta_{tt}\phi^{n+1}\|^{2}+\frac{L}{2\varepsilon}\|\delta_{tt}\phi^{n+1}\|^{2}.
    \end{split}
  \end{equation}
  Combining the above equation and the inequality
  $\frac{1}{\tau \gamma} + A\tau \geq \frac{L}{2
    \varepsilon},$
  $\frac{B}{2}+\frac{1}{4\tau\gamma} \geq \frac{L}{2 \varepsilon}$, we get  energy dissipation law
  \eqref{eq:BDF:Edis}.  \hfill
\end{proof}

\begin{remark}\label{bdf2}
  From equation \eqref{eq:BDF:ABcond}, we see that the
  SL-BDF2 scheme is stable with any non-negative $A$ including $A=0$, if
  \begin{equation}\label{eq:BDF:tcond0}
    \tau \leq \frac{2\varepsilon}{L \gamma},
  \end{equation}
  If one takes time step size even smaller, 
  \begin{equation}\label{eq:BDF:tcond1}
  \tau \leq \frac{\varepsilon}{2L \gamma},
  \end{equation}
  then the SL-BDF2 scheme is stable with any any
  non-negative $A$ and $B$, including the case $A=B=0$.
 
  On the other hand side, if we take 
  \begin{equation}\label{eq:ucondstabAB_BDF}
 A=\max_{\tau\ge 0}{\Big\{}\frac{L}{2 \varepsilon \tau}-\frac{1}{\tau^2 \gamma}{\Big\}}=\frac{\gamma L^2}{16\varepsilon^2},\quad B=\frac{L}{\varepsilon}
  \end{equation} 
  then the SL-BDF2 scheme is unconditional stable for any $\tau$.
\end{remark}

\subsection{The stabilized linear Crank-Nicolson scheme.}\label{Subsec.2}
Suppose $\phi^0=\phi_0(\cdot)$ and
$\phi^1\approx \phi(\cdot,\tau)$ are given, our stabilized
linear Crank-Nicolson scheme (SL-CN) calculate
$\phi^{n+1}, n=1,2,\ldots,N=T/\tau-1$ iteratively, using

\begin{equation}\label{accn1}
\frac{\phi^{n+1}-\phi^{n}}{\tau \gamma}=\varepsilon \Delta \left(\frac{\phi^{n+1}+\phi^{n}}{2} \right)
-\frac{1}{\varepsilon}f(\frac{3}{2}\phi^{n}- \frac{1}{2}\phi^{n-1})-A\tau \delta_{t}\phi^{n+1}
-B\delta_{tt}\phi^{n+1},
\end{equation}
where $A$ and $B$ are two non-negative constants.

\begin{thm}\label{accn}
Assume that \eqref{eq:Lip} is satisfied.  Under the condition
  \begin{equation}\label{accn2}
   A \geq \frac{L}{2 \varepsilon \tau}-\frac{1}{\tau^2 \gamma};
   \ \ B\geq \dfrac{L}{2 \varepsilon},
 \end{equation}
 the following energy law holds
 \begin{equation}\label{accn3}
   E_{C}^{n+1}\leq E_{C}^{n}-\left( 
   \frac{1}{\tau \gamma} +A\tau-\frac{L}{2\varepsilon}\right)\|\delta_{t}\phi^{n+1}\|^{2}
   -\left(\frac{B}{2}-\frac{L}{4\varepsilon}\right)\|\delta_{tt}\phi^{n+1}\|^{2},\quad \forall n\geq1,
 \end{equation}
 for the scheme (\ref{accn1}), where we define
 \begin{equation}\label{accn4}
   E_{C}^{n+1}=E(\phi^{n+1})
   +\left(\frac{L}{4\varepsilon} +\frac{B}{2}\right)\|\delta_{t}\phi^{n+1}\|^{2}.
 \end{equation}
\end{thm}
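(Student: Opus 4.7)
The plan is to mirror the BDF2 energy-estimate argument of Theorem \ref{thm1}, but with two structural changes imposed by the Crank-Nicolson form of \eqref{accn1}. I would pair the scheme with $\delta_t\phi^{n+1}$ in the $L^2$ inner product and evaluate the five terms individually. The mobility term becomes $\frac{1}{\tau\gamma}\|\delta_t\phi^{n+1}\|^2$; integration by parts on the averaged Laplacian $\Delta((\phi^{n+1}+\phi^n)/2)$ produces the clean telescoping increment $-\frac{\varepsilon}{2}(\|\nabla\phi^{n+1}\|^2-\|\nabla\phi^n\|^2)$, notably without any $\|\nabla\delta_t\phi^{n+1}\|^2$ residual (this is the main structural difference from \eqref{cs9} and explains why no gradient-difference term appears in \eqref{accn3}); the $A$-term contributes $-A\tau\|\delta_t\phi^{n+1}\|^2$; and for the $B$-term I would apply identity \eqref{eq:ID:1} to $h^{n+1}=\delta_t\phi^{n+1}$, $h^n=\delta_t\phi^n$ to obtain exactly \eqref{cs11-1}.

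The main obstacle, as in the BDF2 analysis, is the nonlinear term $\frac{1}{\varepsilon}(f(\hat{\phi}^{n+1/2}),\delta_t\phi^{n+1})$: since $f$ is sampled at the extrapolated point $\hat{\phi}^{n+1/2}=\frac{3}{2}\phi^n-\frac{1}{2}\phi^{n-1}$ rather than at the midpoint $(\phi^{n+1}+\phi^n)/2$, the Taylor remainder must carry two different quadratic pieces. Following \eqref{bta4}, I would expand $F(\phi^{n+1})$ and $F(\phi^n)$ in Taylor series about $\hat{\phi}^{n+1/2}$ and use the two elementary identities
\[
\phi^{n+1}-\hat{\phi}^{n+1/2}=\tfrac{1}{2}\delta_t\phi^{n+1}+\tfrac{1}{2}\delta_{tt}\phi^{n+1},\qquad \phi^{n}-\hat{\phi}^{n+1/2}=-\tfrac{1}{2}\delta_t\phi^n,
\]
together with $|f'|\le L$ and $(a+b)^2\le 2a^2+2b^2$, to derive the pointwise bound
\[
F(\phi^{n+1})-F(\phi^n)-f(\hat{\phi}^{n+1/2})\delta_t\phi^{n+1}\le \tfrac{L}{4}|\delta_t\phi^{n+1}|^2+\tfrac{L}{4}|\delta_{tt}\phi^{n+1}|^2+\tfrac{L}{8}|\delta_t\phi^n|^2.
\]
Observe that the coefficients $L/4$ and $L/4$ here are half of the ones in \eqref{bta4}, which is the reason the stabilization thresholds in \eqref{accn2} are weaker than those in \eqref{eq:BDF:ABcond}.

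Integrating this bound over $\Omega$, dividing by $\varepsilon$, and substituting into the paired equation replaces the $f$-term by the telescoping part $\frac{1}{\varepsilon}(F(\phi^{n+1})-F(\phi^n),1)$ plus the three quadratic corrections. To recover the stated modified energy $E_C^{n+1}$, I would add and subtract $\frac{L}{4\varepsilon}(\|\delta_t\phi^{n+1}\|^2-\|\delta_t\phi^n\|^2)$, which converts the $B/2$-telescoping of $\|\delta_t\phi\|^2$ into the desired $(L/(4\varepsilon)+B/2)$-telescoping of \eqref{accn4}. The leftover $-\frac{L}{8\varepsilon}\|\delta_t\phi^n\|^2$ has a dissipative sign and can simply be discarded to weaken the inequality. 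Requiring the remaining coefficients of $\|\delta_t\phi^{n+1}\|^2$ and $\|\delta_{tt}\phi^{n+1}\|^2$ to be non-positive produces exactly the two sharp conditions $\frac{1}{\tau\gamma}+A\tau\ge\frac{L}{2\varepsilon}$ and $\frac{B}{2}\ge\frac{L}{4\varepsilon}$ of \eqref{accn2}, yielding \eqref{accn3}.
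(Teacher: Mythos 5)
Your proposal is correct and follows essentially the same route as the paper: pair the scheme with $\delta_t\phi^{n+1}$, Taylor-expand $F(\phi^{n+1})$ and $F(\phi^n)$ about the extrapolated point $\hat{\phi}^{n+1/2}$, bound the quadratic remainders via $|f'|\le L$, and absorb the $\frac{L}{4\varepsilon}\|\delta_t\phi^{n+1}\|^2$ increment into the modified energy $E_C^{n+1}$. The only (immaterial) difference is that the paper regroups the two remainder terms so that their difference of squares collapses exactly to $\frac12 f'(\eta_1^n)\,\delta_t\phi^{n+1}\delta_{tt}\phi^{n+1}$ before applying Young's inequality, whereas you bound each square directly via $(a+b)^2\le 2a^2+2b^2$; both yield the same coefficients $\frac{L}{4\varepsilon}$ on the terms that determine the thresholds in \eqref{accn2}.
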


\begin{proof}
  Pairing the equation (\ref{accn1}) with
  $\delta_t\phi^{n+1}$, we get
  \begin{equation}\label{accn5}
    \begin{split}
      \frac{1}{\tau \gamma}\|\delta_{t}\phi^{n+1} \|^{2}=
      &-\frac{\varepsilon}{2}(\|\nabla \phi^{n+1}\|^{2} -
      \|\nabla \phi^{n}\|^{2})
      -\frac{1}{\varepsilon}\left(f(\frac{3}{2}\phi^{n}- \frac{1}{2}\phi^{n-1}),\phi^{n+1}-\phi^{n}\right)\\
      &-A\tau
      \|\delta_{t}\phi^{n+1}\|^{2}-\frac{B}{2}(\|\delta_{t}\phi^{n+1}\|^{2}-\|\delta_{t}\phi^{n}\|^{2}+\|\delta_{tt}\phi^{n+1}\|^{2}),
    \end{split}
  \end{equation}
  We use Taylor expansion at
  $\hat{\phi}^{n+\frac{1}{2}}=\frac{3}{2}\phi^{n}-\frac{1}{2}\phi^{n-1}$,
  \begin{align}\label{acta1}
    F(\phi^{n+1})&=F(\hat{\phi}^{n+\frac{1}{2}})+f(\hat{\phi}^{n+\frac{1}{2}})(\phi^{n+1}-\hat{\phi}^{n+\frac{1}{2}})
    +\frac{1}{2}f'(\eta^{n}_{1})(\phi^{n+1}-\hat{\phi}^{n+\frac{1}{2}})^{2},\\
    \label{acta2}
    F(\phi^{n})&=F(\hat{\phi}^{n+\frac{1}{2}})+f(\hat{\phi}^{n+\frac{1}{2}})(\phi^{n}-\hat{\phi}^{n+\frac{1}{2}})
    +\frac{1}{2}f'(\eta^{n}_{2})(\phi^{n}-\hat{\phi}^{n+\frac{1}{2}})^{2},
  \end{align}
  Subtracting (\ref{acta2}) from (\ref{acta1}) and the
  definition of $\hat{\phi}^{n+\frac{1}{2}}$, we have
  \begin{equation}\label{acta4}
    \begin{split}
      F(\phi^{n+1})-F(\phi^{n})
      =& f(\hat{\phi}^{n+\frac{1}{2}})(\phi^{n+1}-\phi^{n})+\frac{1}{2}f'(\eta^{n}_{1})(\phi^{n+1}-\hat{\phi}^{n+\frac{1}{2}})^{2}\\
      &\quad -\frac{1}{2}(f'(\eta^{n}_{1})+f'(\eta^{n}_{2})-f'(\eta^{n}_{1}))(\phi^{n}-\hat{\phi}^{n+\frac{1}{2}})^{2}\\
      =& f(\frac{3}{2}\phi^{n}-\frac{1}{2}\phi^{n-1})(\phi^{n+1}-\phi^{n})
      +\frac{1}{2}f'(\eta^{n}_{1})\delta_{t}\phi^{n+1}\delta_{tt}\phi^{n+1}\\
      &\quad
      -\frac{1}{8}(f'(\eta^{n}_{2})-f'(\eta^{n}_{1}))(\delta_{t}\phi^{n})^{2},
    \end{split}
  \end{equation}
  which give us
  \begin{equation}\label{accn6}
    \begin{split}
      &\frac{1}{\varepsilon}\left(f(\frac{3}{2}\phi^{n}
        -\frac{1}{2}\phi^{n-1}),\phi^{n+1}-\phi^{n} \right)\\
      &\quad=\frac{1}{\varepsilon}(F(\phi^{n+1})-F(\phi^{n}),1)
      -\frac{1}{2\varepsilon}(f'(\eta^{n}_{1}),\delta_{t}\phi^{n+1}\delta_{tt}\phi^{n+1})\\
      &\qquad+\frac{1}{8\varepsilon}(f'(\eta^{n}_{2})-f'(\eta^{n}_{1}),(\delta_{t}\phi^{n})^{2}).\\
    \end{split}
  \end{equation}
  Plugging (\ref{accn6}) into (\ref{accn5}), we obtain
  \begin{equation}\label{cn8}
    \begin{split}
      &\frac{\varepsilon}{2}(\|\nabla \phi^{n+1}\|^{2} -
      \|\nabla \phi^{n}\|^{2})
      +\frac{1}{\varepsilon}(F(\phi^{n+1})-F(\phi^{n}),1)
      +\frac{B}{2}(\|\delta_{t}\phi^{n+1}\|^{2}-\|\delta_{t}\phi^{n}\|^{2})\\
      &\leq -\frac{1}{\tau \gamma}\|\delta_{t}\phi^{n+1}
      \|^{2} -A\tau \|\delta_{t}\phi^{n+1}\|^{2}
      +\frac{L}{4\varepsilon}\|\delta_{t} \phi^{n+1}\|^{2}
      +\frac{L}{4\varepsilon}\|\delta_{t} \phi^{n}\|^{2}\\
      &\quad+\frac{L}{4\varepsilon}\|\delta_{tt} \phi^{n+1}\|^{2}
      -\frac{B}{2}\|\delta_{tt}\phi^{n+1}\|^{2}.
    \end{split}
  \end{equation}
  By the definition of $E_{C}^{n+1}$ and
  $A\tau+\frac{1}{\tau \gamma}\geq \frac{L}{2 \varepsilon}$,
  $\frac{B}{2} \geq \frac{L}{4 \varepsilon}$, we get 
  the desired results.  \hfill
\end{proof}
%

\begin{remark}\label{ac2}
	If we take
	 \begin{equation}\label{eq:ucondstabAB_CN}
	A=\frac{\gamma L^2}{16\varepsilon^2},\quad B=\frac{L}{2\varepsilon}
	\end{equation} 
	then the SL-CN scheme is unconditional stable for any $\tau$.
	
	On the other hand, by using the inequality
    $\| \delta_{tt} \phi^{n+1} \|^2 \leq 2\| \delta_t
    \phi^{n+1} \|^2 + 2\| \delta_t \phi^n \|^2$,
    it is easy to prove that when $A=B=0$, the SL-CN scheme
    (\ref{accn1}) is stable for
  \begin{equation}\label{eq:BDF:tcond}
    \tau \leq \dfrac{2\varepsilon}{3L\gamma }.
  \end{equation}
\end{remark}

\begin{remark}\label{rem:order}
	\yu{
	To make SL-BDF2 and SL-CN scheme be unconditionally stable, i.e. stable for 
	any time step size  $\tau>0$, we need take $A \sim 
	O({\gamma}/{\varepsilon^2})$. This seems that $A$ need to be very large in 
	a real simulation {since physically $\varepsilon$ is very small}. But 
	actually, it is not necessary. It is proved that the numerical interface 
	for the Allen-Cahn equatoin converges with the rate 
	$O(\varepsilon^2|\ln\varepsilon|^2)$ if no singularities 
	appear\cite{feng_numerical_2003,feng_analysis_2015}, which suggests that we 
	don't need to take $\varepsilon$ as small as the width of a physical 
	interface. Furthermore, $A$ has a linear dependence on the value of 
	$\gamma$. It was showed by Magaletti et al.~\cite{magaletti2013sharp} and 
	Xu et al.~\cite{xu_sharp-interface_2017} that the phase-field  
	Cahn-Hilliard--Navier-Stokes model for binary fluids has a fast convergence 
	with respect to $\varepsilon$ when the phenomenological mobility $\gamma 
	\sim O(\varepsilon^2)$. When coupled with hydrodynamics, {what is a 
	proper choice for the mobility $\gamma$ in the Allen-Cahn model is an 
	interesting question. We leave this to a future study.} 
}
\end{remark}

\begin{remark}\label{rem:li}
	\yu{
		Recently, Li, Qiao and Tang \cite{li_characterizing_2016}, 
		Li and Qiao \cite{li_second_2017} studied several first order and second
		order stabilized semi-implicit Fourier schemes, respectively, for the Cahn-Hilliard equation
		with double-well potential 
		\begin{equation} \label{eq:doublewell0}
		F(\phi)= \frac{1}{4}(\phi^2-1)^2.
		\end{equation}
		Without a Lipschitz condition on $F'(\phi)$, they proved that those schemes are unconditionally stable when very large stability constant $A$ used. 
		For example, according to Theorem 1.3 in \cite{li_second_2017}, for a 
		classical second order semi-implicit stabilized scheme
		proposed by Xu and Tang \cite{xu_stability_2006} applied to the 
		Cahn-Hilliard equation, the stabilization constant $A$ need to be as 
		large as $O(|\ln\varepsilon|^2/\varepsilon^8)$ to make the scheme 
		unconditionally stable (Note that the $A$ in \cite{li_second_2017} 
		corresponds to $\varepsilon B$ in this paper). However, the constants 
		$A, B$ in this paper are only of order $O(\gamma/\varepsilon^2), 
		O(1/\varepsilon)$, respectively.
		The reasons are in two aspects.
		Firstly, {the Cahn-Hilliard equation is much harder to solve than 
		the Allen-Cahn equation.} For the Allen-Cahn equation, since its 
		solution satisfies a maximum principle, 
		it is reasonable to modify $F$ defined in \eqref{eq:doublewell0} for
		$|\phi| > 1$, such that the Lipschitz condition \eqref{eq:Lip} is satisfied.
		Secondly, we use two stabilization terms instead of only one 
		stabilization term, {the extra one helps to maintain the stability 
		for larger time step sizes}.  The approach presented in this paper can 
		be extended to the 
		Cahn-Hilliard equation with {\em{quadratic growth}} energy as 
		well\cite{wang_two_2017, WangYu2017b}.
	}
\end{remark}

\section{Convergence analysis}

In this section, we shall establish the error estimate
of the two proposed schemes for the Allen-Cahn equation in the norm of
$l^{\infty}(0,T;L^{2})\cap l^{2}(0,T;H^{1})$.  
We will shown that, if the interface is well developed in the
initial condition, the error bounds depend on
$1/\varepsilon$ only in some lower polynomial order for
small $\varepsilon$.  Let $\phi(t^{n})$ be the exact
solution at time $t=t^{n}$ to the Allen-Cahn equation (\ref{eq:AC}) and
$\phi^{n}$ be the solution at time $t=t^{n}$ to the time
discrete numerical scheme \eqref{BDF1} (or \eqref{accn1}), we define error
function $e^{n}:=\phi^{n}-\phi(t^{n})$. Obviously $e^{0}=0$.

Before presenting the detailed error analysis, we first make
some assumptions. For simplicity, we take $\gamma=1$ in this
section, and assume $0<\varepsilon<1$.  We use notation
$\lesssim$ in the way that $f\lesssim g$ means that
$f \le C g$ with positive constant $C$ independent of
$\tau, \varepsilon$.   

\begin{assumption}\label{ap:1}
  We make following assumptions on $f$:
  $f=F'$, for $F\in C^{4}(\mathbf{R})$, such that
   $f'$ and $f''$ are uniformly bounded, i.e. $f$
    satisfies \eqref{eq:Lip} and
    \begin{equation}\label{eq:Lip2}
      \max_{\phi\in\mathbf{R}} | f''(\phi) | \le L_2,
    \end{equation}
    where $L_2$ is a non-negative constant.
\end{assumption}

Since the solution of Allen-Cahn equation satisfies maximum
principle (see Remark \ref{rmk:2.1}), one can always modify $f(\phi)$ for large
$|\phi|$ such that Assumption \ref{ap:1} hold without
affecting the exact solution.

\begin{assumption}\label{ap:02}
  \begin{itemize}
  \item[(i)] We assume that there exist non-negative
    constants $\sigma_{1}$ such that
    \begin{align}\label{ap:022}
      E_{\varepsilon}(\phi^{0}):=\frac{\varepsilon}{2}\|\nabla \phi^{0}\|^{2}+\frac{1}{\varepsilon}\|F(\phi^{0})\|_{L^{1}} &\lesssim \varepsilon^{-2\sigma_{1}},\\
      \label{init1}
      \|\phi_{t}^0\|^2&\lesssim \varepsilon^{-2\sigma_{1}-1},\\
      \label{init2}
      \|\nabla \phi_{t}^0\|^2&\lesssim \varepsilon^{-2\sigma_{1}-3},\\
      \label{init3}
      \|\nabla \phi_{tt}^0\|^2&\lesssim \varepsilon^{-2\sigma_{1}-7}.
   \end{align}

\item[(ii)] Assume that an appropriate scheme is used to
  calculate the numerical solution at first step, such that
\begin{align}\label{eq:AP:phi1}
  E_\varepsilon (\phi^1) \le E_\varepsilon (\phi^0) &\lesssim\varepsilon^{-2\sigma_{1}},\\
  \label{eq:AP:phi1mu2}
  \frac{1}{\tau }\|\delta_{t}\phi^{1}\|^{2} &\lesssim \varepsilon^{-2 \sigma_{1}}.
  \end{align}
Then it is easy to get
\begin{align}\label{ap:0220}
  E_{C}^{1} &\lesssim \varepsilon^{-2 \sigma_{1}},\\
  \label{ap:0221}
  E_{B}^{1}&\lesssim \varepsilon^{-2 \sigma_{1}}.
\end{align}

\item[(iii)] There exist a constant $\sigma_{0}>0$,
\begin{equation}\label{ap:022-2}
 \|e^{1}\|^{2}+\varepsilon \|\nabla e^{1}\|^{2} \lesssim \varepsilon^{-\sigma_{0}}\tau^{4}.
\end{equation}
\end{itemize}
\end{assumption}

Given Assumption \ref{ap:1} \ref{ap:02} (i), we have following
estimates for the exact solution to the Allen-Cahn equation.
\begin{lemma}\label{ap:03}
  Let $\phi$ be the exact solution of
  (\ref{eq:AC}), under the condition of Assumption
  \ref{ap:1} and \ref{ap:02} (i),
  the following regularities holds: \\
  
  \begin{enumerate}
  \item[(i)]
    $\int_{0}^{\infty}\|\phi_t\|^{2}{\rm d}t + E_{\varepsilon}(\phi) \lesssim \varepsilon^{-2\sigma_{1}}$;\\

  \item[(ii)]
    $ 2\varepsilon \int_{0}^{\infty}\|\nabla
    \phi_t\|^{2}{\rm d}t +{\rm ess} \sup
    \limits_{[0,\infty]} \|\phi_{t}\|^2
    \lesssim \varepsilon^{-2\sigma_{1}-1}$;\\

  \item[(iii)]
    $\int_{0}^{\infty}\|\phi_{tt}\|^{2}{\rm d}t + {\rm ess}
    \sup \limits_{[0,\infty]} \varepsilon\|\nabla
    \phi_{t}\|^2
    \lesssim \varepsilon^{-2\sigma_{1}-2}$;\\

  \item[(iv)]
    $\varepsilon \int_{0}^{\infty}\| \Delta
    \phi_{tt}\|^{2}{\rm d}t +{\rm ess} \sup
    \limits_{[0,\infty]} \|\nabla \phi_{tt} \|^2
    \lesssim \varepsilon^{-4\sigma_{1}-8} $;\\

  \item[(v)]
    $\int_{0}^{\infty}\|\phi_{ttt}\|^{2}{\rm d}t +{\rm ess}
    \sup \limits_{[0,\infty]} \varepsilon \|\nabla \phi_{tt}
    \|^2 \lesssim \varepsilon^{-4\sigma_{1}-7}$.
  \end{enumerate}
\end{lemma}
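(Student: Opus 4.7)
} The plan is to obtain the five estimates in a cascading manner, each time multiplying the PDE \eqref{eq:AC} (or its $t$-derivatives) by an appropriate test function, using the uniform bounds $|f'|\le L$, $|f''|\le L_2$ from Assumption \ref{ap:1}, and inheriting the $\varepsilon$-dependence from the initial data via Assumption \ref{ap:02}~(i). Throughout I take $\gamma=1$ and use Neumann boundary conditions so that all boundary terms from integration by parts vanish.

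\emph{Step (i)} is immediate: integrate the energy dissipation law \eqref{eq:AC:E1} from $0$ to $\infty$, giving $\int_0^\infty\|\phi_t\|^2\,dt + E_\varepsilon(\phi(t))=E_\varepsilon(\phi^0)\lesssim \varepsilon^{-2\sigma_1}$.

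\emph{Step (ii)} comes from differentiating \eqref{eq:AC} in time, producing $\phi_{tt}=\varepsilon\Delta\phi_t-\tfrac{1}{\varepsilon}f'(\phi)\phi_t$, and pairing with $\phi_t$:
\begin{equation*}
\tfrac12\tfrac{d}{dt}\|\phi_t\|^2+\varepsilon\|\nabla\phi_t\|^2=-\tfrac{1}{\varepsilon}(f'(\phi)\phi_t,\phi_t)\le \tfrac{L}{\varepsilon}\|\phi_t\|^2.
\end{equation*}
Integrating in time and using (i) to absorb $\int_0^t\|\phi_t\|^2\,ds$ (instead of Gronwall, which would produce an exponential in $1/\varepsilon$), together with the initial bound \eqref{init1}, yields (ii).

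\emph{Step (iii)} uses the same differentiated equation paired with $\phi_{tt}$: after integration by parts we obtain
\begin{equation*}
\|\phi_{tt}\|^2+\tfrac{\varepsilon}{2}\tfrac{d}{dt}\|\nabla\phi_t\|^2=-\tfrac{1}{\varepsilon}(f'(\phi)\phi_t,\phi_{tt}) \le \tfrac12\|\phi_{tt}\|^2+\tfrac{L^2}{2\varepsilon^2}\|\phi_t\|^2.
\end{equation*}
Integrating in time, using (i) for the right-hand side and the initial bound \eqref{init2} to control $\varepsilon\|\nabla\phi_t(0)\|^2$, gives (iii).

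\emph{Steps (iv) and (v)} come from differentiating \eqref{eq:AC} twice, yielding
\begin{equation*}
\phi_{ttt}=\varepsilon\Delta\phi_{tt}-\tfrac{1}{\varepsilon}f'(\phi)\phi_{tt}-\tfrac{1}{\varepsilon}f''(\phi)\phi_t^2,
\end{equation*}
and pairing with $\phi_{ttt}$ to obtain $\tfrac12\|\phi_{ttt}\|^2+\tfrac{\varepsilon}{2}\tfrac{d}{dt}\|\nabla\phi_{tt}\|^2\le \tfrac{L^2}{\varepsilon^2}\|\phi_{tt}\|^2+\tfrac{L_2^2}{\varepsilon^2}\|\phi_t\|_{L^4}^4$. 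The main obstacle is controlling $\int_0^t\|\phi_t\|_{L^4}^4\,ds$, which is the source of the jump from $-2\sigma_1$ to $-4\sigma_1$ in the exponents. I would handle this by a Gagliardo--Nirenberg inequality ($\|\phi_t\|_{L^4}^4\lesssim \|\phi_t\|\cdot\|\phi_t\|_{H^1}^3$ in $d\le 3$), bounding one factor of $\|\nabla\phi_t\|$ by its $L^\infty_t$ bound from (iii) and the remaining factors in the norms already controlled by (ii)--(iii). Time-integrating, using \eqref{init3} to control $\varepsilon\|\nabla\phi_{tt}(0)\|^2\lesssim\varepsilon^{-2\sigma_1-6}$, gives both the bound on $\int_0^t\|\phi_{ttt}\|^2\,ds$ (which is the second half of (v)) and on $\varepsilon\|\nabla\phi_{tt}(t)\|^2$. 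To finish (iv) I invert the second-differentiated equation to write $\varepsilon\Delta\phi_{tt}=\phi_{ttt}+\tfrac{1}{\varepsilon}f'(\phi)\phi_{tt}+\tfrac{1}{\varepsilon}f''(\phi)\phi_t^2$ and square-and-integrate; the right-hand side is controlled by the bounds just derived plus (iii), so $\varepsilon\int_0^\infty\|\Delta\phi_{tt}\|^2\,ds\lesssim\varepsilon^{-4\sigma_1-8}$ as required. The hardest part of the whole lemma is tracking the $\varepsilon$-powers in this Gagliardo--Nirenberg step carefully and verifying that the worst exponent matches the stated bound.
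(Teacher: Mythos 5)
Your proposal is correct and follows essentially the same strategy as the paper for items (i)--(iii) and (v): differentiate the equation in time, test with $\phi_t$, $\phi_{tt}$, $\phi_{ttt}$ respectively, use the uniform bounds on $f'$, $f''$, and crucially absorb the lower-order time integrals using the already-established items rather than Gronwall (which would bring in $\exp(C/\varepsilon)$) --- this is exactly the paper's mechanism. The one genuine divergence is item (iv): the paper tests the twice-differentiated equation directly with $-\Delta\phi_{tt}$, so that a single energy identity produces both $\operatorname{ess\,sup}\|\nabla\phi_{tt}\|^2$ and $\varepsilon\int_0^\infty\|\Delta\phi_{tt}\|^2$ simultaneously; you instead prove (v) first, read off $\|\nabla\phi_{tt}\|^2\lesssim\varepsilon^{-4\sigma_1-8}$ from its $\operatorname{ess\,sup}\,\varepsilon\|\nabla\phi_{tt}\|^2$ term, and then recover the $\Delta\phi_{tt}$ bound by solving the equation for $\varepsilon\Delta\phi_{tt}$ and squaring. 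Both routes close: your version gives $\varepsilon\int\|\Delta\phi_{tt}\|^2\lesssim \varepsilon^{-1}\int\|\phi_{ttt}\|^2+\varepsilon^{-3}\int\|\phi_{tt}\|^2+\varepsilon^{-3}\int\|\phi_t\|_{L^4}^4\lesssim\varepsilon^{-4\sigma_1-8}$, matching the stated exponent. Your use of Gagliardo--Nirenberg ($\|\phi_t\|_{L^4}^4\lesssim\|\phi_t\|\,\|\phi_t\|_{H^1}^3$ for $d\le 3$) in place of the paper's cruder $H^1\hookrightarrow L^4$ bound $\|\phi_t\|_{L^4}^4\le C_s(\|\nabla\phi_t\|^4+\|\phi_t\|^4)$ actually yields a slightly better power ($\varepsilon^{-4\sigma_1-4}$ versus $\varepsilon^{-4\sigma_1-5}$ for $\int\|\phi_t\|_{L^4}^4$), though this gain is immaterial since the worst term elsewhere dictates the final exponents; the paper's simpler embedding also has the advantage of working for $d\le 4$ without case distinctions.
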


\begin{proof}   Take $\gamma=1$ in equation \eqref{eq:AC}, we have
	\begin{equation}\label{eq:AC11}
	\phi_{t}- \varepsilon \Delta \phi =- \dfrac{1}{\varepsilon}f(\phi). 
	\end{equation}
	
  \begin{itemize}
  \item[(i)]   Pairing \eqref{eq:AC11} with $\phi_t$ and taking integration
    by parts on the second term, we get
    \begin{equation}\label{eq:AC12}
      \|\phi_t\|^2 + \frac{\varepsilon}{2}\frac{d}{ d t} \|\nabla \phi\|^2
      =-\frac{1}{\varepsilon}(f(\phi),\phi_t)
      =-\frac{1}{\varepsilon}\frac{d}{d t}\int_{\Omega} |F(\phi)| dx.
    \end{equation}
    After integration over $[0, \infty]$ and using the inequality
    \eqref{ap:022}, we obtain (i).

  \item[(ii)] We differentiate \eqref{eq:AC11} in time to
    obtain
    \begin{equation}\label{eq:AC21}
      \phi_{tt}- \varepsilon \Delta \phi_{t} =- \dfrac{1}{\varepsilon}f(\phi)_t. \\
    \end{equation}
    Pairing \eqref{eq:AC21} with $\phi_t$ yields
    \begin{equation}\label{eq:AC22}
      \frac{1}{2}\frac{d}{d t}\|\phi_{t}\|^2+ \varepsilon \|\nabla \phi_{t}\|^2
      =- \dfrac{1}{\varepsilon}(f'(\phi)\phi_t, \phi_t)
      \leq \frac{1}{\varepsilon}\|f'(\phi)\|_{L^{\infty}}\|\phi_t\|^2. \\
    \end{equation}
    Integrate \eqref{eq:AC22} over $[0,\infty)$, yields
    \begin{equation}\label{eq:AC23}
      {\rm ess} \sup \limits_{[0,\infty]}  \|\phi_{t}\|^2+ 2\varepsilon\int_{0}^{\infty} \|\nabla \phi_{t}\|^2
      \lesssim \frac{2}{\varepsilon}\|f'(\phi)\|_{L^{\infty}} \int_{0}^{\infty } \|\phi_t\|^2 {\rm d} t
      +\|\phi_{t}^0\|^2. \\
    \end{equation}
    The assertion then follows from (i) and the inequality
    \eqref{init1}.  

  \item[(iii)] Testing \eqref{eq:AC21} with $\phi_{tt}$, we
    get
    \begin{equation}\label{eq:AC31}
      \begin{split}
        \|\phi_{tt}\|^2+ \frac{\varepsilon}{2}\frac{d}{dt}
        \|\nabla \phi_{t}\|^2
        =&- \dfrac{1}{\varepsilon}(f'(\phi)\phi_t, \phi_{tt})\\
        \leq&
        \frac{1}{2\varepsilon^2}\|f'(\phi)\|_{L^{\infty}}^2\|\phi_t\|^2
        +\frac{1}{2}\|\phi_{tt}\|^2. \\
      \end{split}
    \end{equation}
    Integrating \eqref{eq:AC31} over $[0,\infty)$, we get
    \begin{equation}\label{eq:AC32}
      \int_{0}^{\infty} \|\phi_{tt}\|^2 {\rm  d} t
      +  {\rm ess} \sup \limits_{[0,\infty]} \varepsilon\|\nabla \phi_{t}\|^2
      \lesssim \frac{1}{\varepsilon^2}\|f'(\phi)\|_{L^{\infty}}^2 \int_{0}^{\infty}\|\phi_t\|^2 {\rm d}t
      +\varepsilon\|\nabla \phi_{t}^0\|^2. \\
    \end{equation}
    and by using (i) and the inequality \eqref{init2} of
    Assumption \ref{ap:02}, we obtain (iii).   

  \item[(iv)] We differentiate \eqref{eq:AC21} in time to
    derive
    \begin{equation}\label{eq:AC41}
      \phi_{ttt}- \varepsilon \Delta \phi_{tt} =- \dfrac{1}{\varepsilon}f(\phi)_{tt}. \\
    \end{equation}
    Testing \eqref{eq:AC41} with $-\Delta \phi_{tt} $ and
    using $H^1(\Omega)\hookrightarrow L^4 (\Omega) $ for
    $d \leq 4$, we have
    \begin{equation}\label{eq:AC42}
      \begin{split}
        & \frac{1}{2} \frac{d}{ dt } \|\nabla \phi_{tt}\|^2 + \varepsilon \|\Delta \phi_{tt} \|^2\\
        =& \dfrac{1}{\varepsilon}(f(\phi)_{tt},\Delta \phi_{tt})\\
        =& \dfrac{1}{\varepsilon}(f''(\phi)\phi_{t}^2 + f'(\phi) \phi_{tt},\Delta \phi_{tt})\\
        \leq
        &\dfrac{1}{\varepsilon^3}(\|f''(\phi)\|_{L^{\infty}}^2
        \|\phi_t\|_{L^4}^{4}
        +\|f'(\phi)\|_{L^{\infty}}^2\|\phi_{tt}\|^2)
        +\frac{\varepsilon}{2}\|\Delta \phi_{tt}\|^2\\
        \leq &
        \dfrac{1}{\varepsilon^3}(C_{s}\|f''(\phi)\|_{L^{\infty}}^2
        (\|\nabla \phi_t\|^{4}+\|\phi_{t}\|^4)
        +\|f'(\phi)\|_{L^{\infty}}^2\|\phi_{tt}\|^2)
        +\frac{\varepsilon}{2}\|\Delta \phi_{tt}\|^2. \\
      \end{split}
    \end{equation}
    Integrating \eqref{eq:AC42} over $[0,\infty)$, we obtain
    \begin{equation}\label{eq:AC42-1}
      \begin{split}
        &{\rm ess} \sup \limits_{[0,\infty]} \|\nabla \phi_{tt}\|^2 + \varepsilon \int_{0}^{\infty}\|\Delta \phi_{tt} \|^2\\
        \lesssim &\dfrac{2}{\varepsilon^3}\left(
          C_{s}\|f''(\phi)\|_{L^{\infty}}^2
          \int_{0}^{\infty}( \|\nabla
          \phi_t\|^{4}+\|\phi_{t}\|^4) {\rm d}t
          +\|f'(\phi)\|_{L^{\infty}}^2  \int_{0}^{\infty}\|\phi_{tt}\|^2 {\rm d}t \right)\\
        &+ \|\nabla \phi_{tt}^0\|^2 \\
        \leq &\dfrac{2}{\varepsilon^3}
        C_{s}\|f''(\phi)\|_{L^{\infty}}^2 \left({\rm ess}
          \sup_{[0,\infty]} \|\nabla \phi_{t}\|^2
          \int_{0}^{\infty}\|\nabla \phi_t\|^{2} {\rm d}t
          +{\rm ess} \sup_{[0,\infty]} \| \phi_{t}\|^2 \int_{0}^{\infty}\|\phi_t\|^{2} {\rm d}t \right)\\
        &+\dfrac{2}{\varepsilon^3}\|f'(\phi)\|_{L^{\infty}}^2
        \int_{0}^{\infty}\|\phi_{tt}\|^2 {\rm d}t
        +\|\nabla \phi_{tt}^0\|^2. \\
      \end{split}
    \end{equation}
    The assertion then follows from (i) (ii) (iii) and the
    inequality \eqref{init3}.

  \item[(v)] Testing \eqref{eq:AC41} with $ \phi_{ttt} $, we
    have
    \begin{equation}\label{eq:AC51}
      \begin{split}
        \| \phi_{ttt}\|^2 + \frac{\varepsilon}{2}\frac{d
        }{dt} \|\nabla \phi_{tt} \|^2
        =& -\dfrac{1}{\varepsilon}(f(\phi)_{tt}, \phi_{ttt})\\
        \leq&
        \dfrac{1}{\varepsilon^2}(\|f''(\phi)\|_{L^{\infty}}^2
        \|\phi_t\|_{L^4}^{4}
        +\|f'(\phi)\|_{L^{\infty}}^2\|\phi_{tt}\|^2)
        +\frac{1}{2}\|\phi_{ttt}\|^2. \\
      \end{split}
    \end{equation}
    Integrating in time yields
    \begin{equation}\label{eq:AC52}
      \begin{split}
        &\int_{0}^{\infty}\| \phi_{ttt}\|^2 + {\rm ess} \sup \limits_{[0,\infty]} \varepsilon \|\nabla \phi_{tt} \|^2\\
        \lesssim &\dfrac{2}{\varepsilon^2}
        C_{s}\|f''(\phi)\|_{L^{\infty}}^2 \left({\rm ess}
          \sup_{[0,\infty]} \|\nabla \phi_{t}\|^2
          \int_{0}^{\infty}\|\nabla \phi_t\|^{2} {\rm d}t
          +{\rm ess} \sup_{[0,\infty]} \| \phi_{t}\|^2 \int_{0}^{\infty}\|\phi_t\|^{2} {\rm d}t \right)\\
        &+\dfrac{2}{\varepsilon^2}\|f'(\phi)\|_{L^{\infty}}^2
        \int_{0}^{\infty}\|\phi_{tt}\|^2 {\rm d}t
        +\varepsilon\|\nabla \phi_{tt}^0\|^2. \\
      \end{split}
    \end{equation}
    The assertion then follows from (i) (ii) (iii) and the
    inequality \eqref{init3}.
  \end{itemize}
  \hfill
\end{proof}

\subsection{Convergence analysis of the SL-BDF2 scheme}
Now, we present our first error estimate result, which is a
coarse estimate obtained by a standard approach.

\begin{prop}(Coarse error estimate)\label{prop1} Given
  Assumption \ref{ap:1} \ref{ap:02},
  $\forall \tau \leq\frac{1}{12}$, following error estimates
  hold for the SL-BDF2 scheme \eqref{BDF1}.
	\begin{equation}\label{BDF12-0}
	\begin{split}
	&\frac{1}{2}\|e^{n+1}\|^{2}+\|2e^{n+1}-e^{n}\|^{2}
	+2A\tau^2\| e^{n+1}\|^{2}
    +4\varepsilon\tau\|\nabla e^{n+1}\|^{2}\\
	 &+2A\tau^2\|\delta_{t} e^{n+1}\|^{2}
	 +\|\delta_{tt}e^{n+1}\|^{2}
	+4B\tau \|e^{n+1}\|^2\\
	\lesssim &
	\|e^{n}\|^{2}+\|2e^{n}-e^{n-1}\|^{2}
	+2A\tau^2\| e^{n}\|^{2}\\
	&+ \varepsilon^{- (4\sigma_1+7)} \tau^4
	+4\left(B^2+\frac{L^{2}}{\varepsilon^{2}}\right)\tau\|2e^n-e^{n-1}\|^2,
      \quad n\ge 1,
    \end{split}
  \end{equation}
  and
  \begin{equation}\label{BDF13-0}
	\begin{split}
      &\max_{1\leq n \leq N}
      \left(\|e^{n+1}\|^{2}+2\|2e^{n+1}-e^{n}\|^{2} +4
        A\tau^2\| e^{n+1}\|^{2}\right)
      +8  \varepsilon \tau \sum_{n=1}^{N}\|\nabla e^{n+1}\|^{2}\\
      &+ 4A\tau^2 \sum_{n=1}^{N}\|\delta_{t} e^{n+1}\|^{2}
      +2 \sum_{n=1}^{N}\|\delta_{tt}e^{n+1}\|^{2}
      +8  B\tau \sum_{n=1}^{N}\|e^{n+1}\|^2\\
      \lesssim{}& \exp \left(
        80\left(B^2+\frac{L^{2}}{\varepsilon^{2}}\right)T +
        12T \right) \varepsilon^{- \max\{4\sigma_1+7,
        \sigma_0\}} \tau^4.
	\end{split}
  \end{equation}
\end{prop}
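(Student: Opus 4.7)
The plan is to derive an error equation by subtracting the scheme \eqref{BDF1} from the corresponding identity satisfied by the exact solution, then test with a suitable multiplier of $e^{n+1}$, apply the BDF2 algebraic identity \eqref{eq:ID:2}, use the Lipschitz property of $f$ to control the nonlinearity, and close the argument by discrete Gronwall. Concretely, using $\phi_t(t^{n+1}) = \varepsilon\Delta\phi(t^{n+1}) - \varepsilon^{-1}f(\phi(t^{n+1}))$ together with Taylor expansions about $t^{n+1}$, the error function satisfies
\begin{equation*}
D_{\tau}e^{n+1} - \varepsilon\Delta e^{n+1} + A\tau\delta_{t}e^{n+1} + B\delta_{tt}e^{n+1}
+ \frac{1}{\varepsilon}\bigl[f(\hat\phi^{n+1}) - f(2\phi(t^n)-\phi(t^{n-1}))\bigr] = R^{n+1},
\end{equation*}
where $R^{n+1}$ collects the BDF2 truncation error $\phi_t(t^{n+1})-D_\tau\phi(t^{n+1})$, the explicit-extrapolation error $\varepsilon^{-1}[f(2\phi(t^n)-\phi(t^{n-1}))-f(\phi(t^{n+1}))]$, and the two stabilization residuals $-A\tau\delta_t\phi(t^{n+1})$ and $-B\delta_{tt}\phi(t^{n+1})$. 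Each of these is $O(\tau^2)$ with an explicit integral remainder involving at most $\phi_{ttt}$ and $\phi_{tt}$.

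Next I would pair the error equation with $4\tau e^{n+1}$. The BDF2 identity \eqref{eq:ID:2} turns $4\tau(D_\tau e^{n+1},e^{n+1})$ into the telescoping quantity $(\|e^{n+1}\|^2-\|e^n\|^2) + (\|2e^{n+1}-e^n\|^2-\|2e^n-e^{n-1}\|^2) + \|\delta_{tt}e^{n+1}\|^2$; the Laplacian term produces $4\varepsilon\tau\|\nabla e^{n+1}\|^2$ via integration by parts with Neumann data; the $A$-stabilization yields $2A\tau^2(\|e^{n+1}\|^2-\|e^n\|^2+\|\delta_te^{n+1}\|^2)$ through identity \eqref{eq:ID:1}; and the $B$-term is handled by writing $\delta_{tt}e^{n+1}=e^{n+1}-(2e^n-e^{n-1})$ and applying Young's inequality to obtain a lower bound $2B\tau\|e^{n+1}\|^2 - 2B^2\tau\|2e^n-e^{n-1}\|^2 - \tfrac12\|e^{n+1}\|^2$ (absorbing the last piece into the $\tfrac12\|e^{n+1}\|^2$ already on the LHS). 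The nonlinear contribution is controlled by the Lipschitz bound \eqref{eq:Lip}, giving $\varepsilon^{-1}\|f(\hat\phi^{n+1})-f(2\phi(t^n)-\phi(t^{n-1}))\|\le L\varepsilon^{-1}\|2e^n-e^{n-1}\|$, and another Young's inequality splits this into $4L^2\varepsilon^{-2}\tau\|2e^n-e^{n-1}\|^2$ plus a $\tfrac14\|e^{n+1}\|^2$ that is again absorbed.

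For the remainder term $R^{n+1}$, Cauchy--Schwarz followed by Young gives $4\tau(R^{n+1},e^{n+1})\le C\tau\|R^{n+1}\|^2 + \tfrac14\|e^{n+1}\|^2$, and the main job is to use integral Taylor remainders to bound $\sum_n \tau\|R^{n+1}\|^2 \lesssim \tau^4 \bigl(\int_0^T\|\phi_{ttt}\|^2+\varepsilon^{-2}\int_0^T\|\phi_{tt}\|^2 + A^2\int_0^T\|\phi_t\|^2 + B^2\int_0^T\|\phi_{tt}\|^2 \bigr)$. Here Lemma \ref{ap:03}(i)--(v) converts every integral into a negative power of $\varepsilon$; the dominant term comes from $\int_0^\infty\|\phi_{ttt}\|^2\lesssim\varepsilon^{-4\sigma_1-7}$, producing the prefactor $\varepsilon^{-(4\sigma_1+7)}\tau^4$ in \eqref{BDF12-0}. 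Collecting everything yields exactly the one-step inequality \eqref{BDF12-0}.

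The global estimate \eqref{BDF13-0} follows by summing \eqref{BDF12-0} from step $1$ to $N$, using the initial bound \eqref{ap:022-2} at $n=1$ (which contributes the $\varepsilon^{-\sigma_0}\tau^4$ and accounts for the $\max\{4\sigma_1+7,\sigma_0\}$), and invoking the discrete Gronwall lemma on the quantity $\|e^{n+1}\|^2+\|2e^{n+1}-e^n\|^2+2A\tau^2\|e^{n+1}\|^2$. The restriction $\tau\le 1/12$ is exactly what is needed to absorb the Gronwall coefficient (so that $1-C\tau\ge c>0$ after the absorbed $\tfrac14\|e^{n+1}\|^2$ terms are moved to the LHS), and the factor $(B^2+L^2/\varepsilon^2)\tau\|2e^n-e^{n-1}\|^2$ on the RHS generates the $\exp\bigl(80(B^2+L^2/\varepsilon^2)T+12T\bigr)$ prefactor. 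The main obstacle in this argument is the delicate bookkeeping of the $\varepsilon$-powers in $R^{n+1}$: every appearance of $f'$, $f''$, $A$, $B$, or an $\varepsilon^{-1}$ from the PDE must be tracked so that the final truncation bound matches $\varepsilon^{-(4\sigma_1+7)}\tau^4$ rather than something worse, and the $B\delta_{tt}e^{n+1}$ coercivity must be arranged so that it does not destroy the positivity produced by the BDF2 identity.
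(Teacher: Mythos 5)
Your proposal is correct and follows essentially the same route as the paper: the same error equation with residuals $\widetilde R_1^{n+1}$ (BDF2 truncation), $\widetilde R_2^{n+1}$, $\widetilde R_3^{n+1}$ (stabilization/extrapolation), testing against $e^{n+1}$ (the paper's multiplier; your $4\tau e^{n+1}$ is just the rescaling that produces the stated coefficients), the identities \eqref{eq:ID:1}--\eqref{eq:ID:2}, the rewriting $\delta_{tt}e^{n+1}=e^{n+1}-(2e^n-e^{n-1})$ with Young for the $B$-term, the Lipschitz bound giving the $(B^2+L^2/\varepsilon^2)\|2e^n-e^{n-1}\|^2$ contribution, the residual bounds via Lemma \ref{ap:03} with $\int_0^\infty\|\phi_{ttt}\|^2\lesssim\varepsilon^{-4\sigma_1-7}$ dominating, and discrete Gronwall with \eqref{ap:022-2} for the global bound. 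Only the bookkeeping of absolute constants differs slightly; the argument is the paper's.
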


\begin{proof}
  By taking the difference of equation \eqref{eq:AC} and
  \eqref{BDF1}, we obtain following error equation
  \begin{equation}\label{BDF2}
    \begin{split}
      D_{\tau}e^{n+1}=&\widetilde{R}_{1}^{n+1}+\varepsilon\Delta e^{n+1}-\frac{1}{\varepsilon}[f(2\phi^{n}-\phi^{n-1})-f(\phi(t^{n+1}))]\\
	  &-A\tau \delta_{t}e^{n+1} -B\delta_{tt}e^{n+1} -A
      \widetilde{R}_{2}^{n+1} -B \widetilde{R}_{3}^{n+1}.
    \end{split}
  \end{equation}
  where
  \begin{align*}
    D_{\tau}e^{n+1}:&=\frac{3e^{n+1}-4e^{n}+e^{n-1}}{2\tau},\\
    \widetilde{R}_{1}^{n+1}:&=\phi_{t}(t^{n+1})-D_{\tau}\phi(t^{n+1}),\\
    \widetilde{R}_{2}^{n+1}:&=\tau\delta_t\phi(t^{n+1})=\tau(\phi(t^{n+1})-\phi(t^{n})),\\
    \widetilde{R}_{3}^{n+1}:&=\delta_{tt}\phi(t^{n+1})=\phi(t^{n+1})-2\phi(t^{n})+\phi(t^{n-1}).\\
  \end{align*}
  Pairing (\ref{BDF2}) with $e^{n+1}$, we obtain
  \begin{equation}\label{BDF3}
    \begin{split}
      &(D_{\tau}e^{n+1},e^{n+1}) +\varepsilon\|\nabla
      e^{n+1}\|^{2}
	  +A\tau(\delta_{t} e^{n+1},e^{n+1})\\
	  ={}&(\widetilde{R}_{1}^{n+1}, e^{n+1}) -A(
      \widetilde{R}_{2}^{n+1}, e^{n+1})
      -B(\widetilde{R}_{3}^{n+1},e^{n+1})
	  -B(\delta_{tt}e^{n+1},e^{n+1})\\
	  &-\frac{1}{\varepsilon}\left(f(2\phi^{n}-\phi^{n-1})-f(\phi(t^{n+1})),e^{n+1}\right)\\
	  =&:J_{1}+J_{2}+J_{3}+J_{4}+J_{5}.
    \end{split}
  \end{equation}

  First, for the terms on the left side of (\ref{BDF3}), using 
  identity \eqref{eq:ID:2}, we
  have
  \begin{equation}\label{BDF4}
    \begin{split}
	  (D_{t}e^{n+1},e^{n+1})=
	  &\frac{1}{4\tau}(\|e^{n+1}\|^{2}+\|2e^{n+1}-e^{n}\|^{2})\\
	  &-\frac{1}{4\tau}(\|e^{n}\|^{2}+\|2e^{n}-e^{n-1}\|^{2})
	  +\frac{1}{4\tau}\|\delta_{tt}e^{n+1}\|^{2},
    \end{split}
  \end{equation}
  and using identity \eqref{eq:ID:1}, we get
  \begin{equation}\label{BDF5}
    A\tau(\delta_{t} e^{n+1}, e^{n+1})
    =\frac{1}{2}A\tau(\| e^{n+1}\|^{2}-\| e^{n}\|^{2}+\|\delta_{t} e^{n+1}\|^{2}).
  \end{equation}
  Then we estimate the terms on the right hand side of \eqref{BDF3}.
  \begin{align}\label{BDF6}
      J_{1}&=(\widetilde{R}_{1}^{n+1}, e^{n+1})
      \leq
      \|\widetilde{R}_{1}^{n+1}\|^{2}
      + \frac{1}{4}\| e^{n+1}\|^{2}, \\
  \label{BDF7}
      J_{2}&=-A( \widetilde{R}_{2}^{n+1}, e^{n+1})
      \leq A^{2}\|\widetilde{R}_{2}^{n+1}\|^{2}
      + \frac{1}{4}\| e^{n+1}\|^{2},\\
  \label{BDF8}
      J_{3}&=-B(\widetilde{R}_{3}^{n+1},e^{n+1})
      \leq
      B^{2}\|\widetilde{R}_{3}^{n+1}\|^{2}
      + \frac{1}{4}\|e^{n+1}\|^{2}\\
  \label{BDF9}
    \begin{split}
      J_{4}&=-B(\delta_{tt}e^{n+1},e^{n+1})
      =-B(e^{n+1}-(2e^n-e^{n-1}),e^{n+1})\\
      &\quad \leq -B\|e^{n+1}\|^2
      +B^2\|2e^n-e^{n-1}\|^2
      +\frac{1}{4}\| e^{n+1}\|^2.
    \end{split}
  \end{align}
  \begin{equation}\label{BDF10}
	\begin{split}
      J_{5}&=-\frac{1}{\varepsilon}\left(f(2\phi^{n}-\phi^{n-1})-f(\phi(t^{n+1})),e^{n+1}\right)\\
      & \leq \frac{L}{\varepsilon}\left(|2\phi^{n}-\phi^{n-1}-\phi(t^{n+1})|,|e^{n+1}|\right) \\
      & = \frac{L}{\varepsilon}\left(|2e^{n}-e^{n-1}-\delta_{tt}\phi(t^{n+1})|,|e^{n+1}|\right) \\
      &\leq \frac{L^{2}}{\varepsilon^{2}}\|2e^n-e^{n-1}\|^{2}
      +\frac{L^{2}}{\varepsilon^{2}}\|\widetilde{R}_{3}^{n+1}\|^{2}
      + \frac{1}{2}\|e^{n+1}\|^{2}.
	\end{split}
  \end{equation}

  Combining (\ref{BDF3})-(\ref{BDF10}) together,  yields
  \begin{equation}\label{BDF11}
	\begin{split}
      &\frac{1}{4\tau}(\|e^{n+1}\|^{2}+\|2e^{n+1}-e^{n}\|^{2})
      +\frac{1}{2}A\tau\| e^{n+1}\|^{2}\\
      &+\frac{1}{2}A\tau\|\delta_{t} e^{n+1}\|^{2}
      +\varepsilon\|\nabla e^{n+1}\|^{2}
      +\frac{1}{4\tau}\|\delta_{tt}e^{n+1}\|^{2}
      +B\|e^{n+1}\|^2\\
      \leq{}&
      \frac{1}{4\tau}(\|e^{n}\|^{2}+\|2e^{n}-e^{n-1}\|^{2})
      +\frac{1}{2}A\tau\| e^{n}\|^{2}\\
      &+\|\widetilde{R}_{1}^{n+1}\|^{2}
      +A^{2}\|\widetilde{R}_{2}^{n+1}\|^{2}
      +\left(B^2+\frac{L^{2}}{\varepsilon^{2}}\right)\|\widetilde{R}_{3}^{n+1}\|^{2}\\
      &+\left(B^2+\frac{L^{2}}{\varepsilon^{2}}\right)\|2e^n-e^{n-1}\|^2
      + \frac{3}{2}\| e^{n+1}\|^{2}.
	\end{split}
  \end{equation}
  By using Taylor expansions in integral form, one can get estimates
  for the residuals
  \begin{align} \label{eq:R1}
    \|\widetilde{R}_{1}^{n+1}\|^{2}&\leq
    8\tau^{3}\int_{t_{n-1}}^{t_{n+1}}\|\phi_{ttt}(t)\|^{2}{\rm
      d}t \lesssim \tau^3 \varepsilon^{-4\sigma_1-7},\\
 \label{eq:R2}
    \| \widetilde{R}_{2}^{n+1}\|^{2}&\leq
    \tau^{3}\int_{t_{n}}^{t_{n+1}}\|\phi_t(t)\|^{2}{\rm d}t
    \lesssim \tau^3 \varepsilon^{-2\sigma_1},\\
\label{eq:R3}
    \|\widetilde{R}_{3}^{n+1}\|^{2}&\leq
    6\tau^{3}\int_{t_{n-1}}^{t_{n+1}}\|\phi_{tt}(t)\|^{2}{\rm d}
    \lesssim \tau^3 \varepsilon^{-2\sigma_1-2}.
  \end{align}
  Taking $\tau \leq \frac{1}{12}$, combining \eqref{eq:R1}-\eqref{eq:R3}
  and the assumptions about the first step error, by using a discrete
  Gronwall inequality, we obtain \eqref{BDF13-0}. \eqref{BDF12-0} is obtained without using Gronwall inequality.
  \hfill
\end{proof}

Proposition \ref{prop1} is the usual error estimate, in which the error
growth depends on $1/\varepsilon$ exponentially.
To obtain a finer estimate on the error, we will need to use
a spectral estimate of the linearized Allen-Cahn operator
by Chen \cite{chen_spectrum_1994} for the case when the
interface is well developed in the Allen-Cahn system.

\begin{lemma}\label{lemma1} Let $\phi$ be the exact
  solution of Allen-Cahn equation \eqref{eq:AC} with
  interfaces are well developed
	in the initial condition (i.e. conditions (1.9)-(1.15) in
	\cite{chen_spectrum_1994} are satisfied).  
  Then there exist $0<\varepsilon_{0}\ll 1$ and positive constant $C_{0}$ such
  that the principle eigenvalue of the linearized Allen-Cahn operator
  $\mathcal{L}_{AC}:=-(\varepsilon\Delta-\frac{1}{\varepsilon}f'(\phi)I){/\varepsilon}$
  satisfies for all $t\in [0,T]$
  \begin{equation}\label{eigen}
    \lambda_{CH}=\inf_{\substack{0\neq v\in H^{1}(\Omega)\\ }}
    \frac{\varepsilon\|\nabla 
    v\|^{2}+\frac{1}{\varepsilon}(f'(\phi(\cdot,t))v,v)}{{\varepsilon}\|v\|^{2}}
    \geq-C_{0},
  \end{equation}
  for $\varepsilon\in (0,\varepsilon_{0})$. 
\end{lemma}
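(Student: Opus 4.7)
The plan is to invoke the spectrum estimate of Chen \cite{chen_spectrum_1994}, which builds on the asymptotic analysis of de Mottoni and Schatzman \cite{de_mottoni_evolution_1989, de_mottoni_geometrical_1995}; the hypothesis that the interface is well developed in the initial condition is precisely what is needed to apply their result to $\phi(\cdot,t)$ on all of $[0,T]$. I sketch the underlying strategy rather than reprove the lemma.

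First, under the well-developed interface hypothesis one constructs a uniform matched asymptotic expansion $\phi(x,t)\approx \Phi(d(x,t)/\varepsilon)$ in an $O(\varepsilon|\ln\varepsilon|)$ neighbourhood of the evolving zero set $\Gamma_t$ of $\phi$, where $\Phi(\xi)=\tanh(\xi/\sqrt{2})$ is the standing wave profile and $d$ is the signed distance to $\Gamma_t$; away from $\Gamma_t$ one has $\phi(\cdot,t)\approx \pm 1$ and $f'(\phi)$ is bounded below by a fixed positive constant.

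Next, I would split the Rayleigh quotient via a smooth cut-off $\chi$ supported in a tube of width $O(\varepsilon|\ln\varepsilon|)$ about $\Gamma_t$, writing $v=\chi v+(1-\chi)v$. On the outer piece, positivity of $f'(\phi)$ immediately gives a lower bound of order $\varepsilon^{-1}\|(1-\chi)v\|^2$, which is much stronger than the target $-C_0\|v\|^2$. On the inner piece, passing to local Fermi coordinates $(s,d)$ about $\Gamma_t$ and rescaling $\xi=d/\varepsilon$, the leading contribution to the numerator takes the form
\begin{equation*}
\int_{\Gamma_t}\!\!\int_{\mathbb{R}} \bigl(|\partial_\xi w|^2 + f'(\Phi(\xi))\,w^2\bigr)\,d\xi\,ds,
\end{equation*}
which is the quadratic form of the one-dimensional operator $\mathcal{L}_0:=-\partial_\xi^2+f'(\Phi)$ on $\mathbb{R}$. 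The operator $\mathcal{L}_0$ has $\Phi'$ as its ground state with eigenvalue $0$ and a strictly positive spectral gap, so decomposing $w$ parallel and perpendicular to $\Phi'$ yields a lower bound of the form $-C_0\int w^2$, where the constant $C_0$ absorbs the mean-curvature contributions of $\Gamma_t$, the tangential gradient terms, and the higher-order asymptotic corrections.

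The main obstacle, and the entire technical content of \cite{chen_spectrum_1994}, is controlling the remainder between $\phi$ and its matched asymptotic approximation uniformly for $t\in[0,T]$, and transferring the one-dimensional spectral gap on the tanh profile to the full domain without losing more than an $O(1)$ constant; curvature singularities or topological changes of $\Gamma_t$ are ruled out precisely by the well-developed interface assumption. In the present paper the lemma is invoked as a black box: the hypotheses stated as (1.9)--(1.15) of \cite{chen_spectrum_1994} encode the well-developed interface condition, and the theorem proved there yields \eqref{eigen} uniformly on $[0,T]$ for all $\varepsilon\in(0,\varepsilon_0)$.
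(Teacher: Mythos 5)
The paper gives no proof of this lemma at all—it is quoted directly as a known result of Chen \cite{chen_spectrum_1994} (building on de Mottoni--Schatzman), which is exactly how you treat it: as a black box whose hypotheses are the well-developed-interface conditions (1.9)--(1.15) of that reference. Your additional sketch of the underlying mechanism (outer positivity of $f'(\phi)$, inner reduction to the one-dimensional operator $-\partial_\xi^2+f'(\Phi)$ with ground state $\Phi'$ and a spectral gap) is an accurate summary of Chen's argument, so your proposal matches the paper's approach.
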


\begin{thm}\label{BDFerror}
  Suppose all of the Assumption \ref{ap:1},\ref{ap:02} hold. Let time
  step $\tau$ satisfy the following constraint
  \begin{equation}\label{finer-error-1}
    \tau \lesssim \min \left\{ \varepsilon^{2},\varepsilon^{\frac{1}{3}\max\{4\sigma_1+7, \sigma_{0}\}+\frac{5}{3}-\frac{2}{d}},
      \varepsilon^{\frac{1}{4}\max\{4\sigma_1+7, \sigma_{0}\}+ \frac{9}{2(6-d)}}\right\},
  \end{equation}
  then the solution of (\ref{BDF1})
  satisfies the following error estimate
  \begin{equation}\label{finer-error}
    \begin{split}
      &\max_{1 \leq n \leq N}\{ \|e^{n+1}\|^2 + 2\|2e^{n+1}-e^n\|^2 + 4 A \tau^2 \|e^{n+1}\|^2 \} \\
      &+ 4A \tau^2 \sum_{n=1}^{N} \|\delta_t e^{n+1}\|^2
      + \varepsilon^2 \sum_{n=1}^{N} \|\nabla e^{n+1}\|^2\\
      \leq & \exp{(8T(C_{0}{\varepsilon}+L+2))}\varepsilon^{- 
      \max\{4\sigma_1+7, \sigma_0\}} \tau^4.
    \end{split}
  \end{equation}
\end{thm}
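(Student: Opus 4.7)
The plan is to repeat the energy test of the error equation \eqref{BDF2} against $e^{n+1}$ as in the proof of Proposition \ref{prop1}, but to replace the crude estimate of the nonlinear term $J_5$ by one that exploits the spectral lower bound of Lemma \ref{lemma1}. The coarse bound of Proposition \ref{prop1} provides $O(\tau^4)$ smallness for $\|e^n\|^2$ and $\varepsilon\|\nabla e^n\|^2$ (with an exponentially bad prefactor in $1/\varepsilon$), and this will serve as a bootstrap input for controlling the quadratic Taylor remainder produced by the refined expansion.

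First I would Taylor-expand $f(2\phi^n-\phi^{n-1})$ about $\phi(t^{n+1})$, using Assumption \ref{ap:1}:
\[
f(2\phi^n-\phi^{n-1})-f(\phi(t^{n+1})) = f'(\phi(t^{n+1}))\bigl[(2e^n-e^{n-1})-\widetilde{R}_3^{n+1}\bigr] + \tfrac12 f''(\xi^{n+1})\bigl[(2e^n-e^{n-1})-\widetilde{R}_3^{n+1}\bigr]^2.
\]
Using the algebraic identity $2e^n-e^{n-1} = e^{n+1}-\delta_{tt}e^{n+1}$, the linear part of $J_5$ separates into
\[
-\tfrac{1}{\varepsilon}(f'(\phi(t^{n+1}))e^{n+1},e^{n+1}) + \tfrac{1}{\varepsilon}(f'(\phi(t^{n+1}))\delta_{tt}e^{n+1},e^{n+1}) + \tfrac{1}{\varepsilon}(f'(\phi(t^{n+1}))\widetilde{R}_3^{n+1},e^{n+1}).
\]
Multiplying \eqref{eigen} by $(1-\varepsilon)$ and handling the leftover $\varepsilon/\varepsilon$ factor with $|f'|\le L$, the first contribution admits the bound
\[
-\tfrac{1}{\varepsilon}(f'(\phi(t^{n+1}))e^{n+1},e^{n+1}) \le (1-\varepsilon)\varepsilon\|\nabla e^{n+1}\|^2 + ((1-\varepsilon)C_0\varepsilon + L)\|e^{n+1}\|^2,
\]
so that after cancellation against the diffusion $\varepsilon\|\nabla e^{n+1}\|^2$ present on the LHS, exactly $\varepsilon^2\|\nabla e^{n+1}\|^2$ is kept on the left --- this is the reason the gradient norm in \eqref{finer-error} appears with weight $\varepsilon^2$ rather than $\varepsilon$. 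The second cross term is absorbed into the $(4\tau)^{-1}\|\delta_{tt}e^{n+1}\|^2$ contribution that arises from identity \eqref{eq:ID:2}, again using $|f'|\le L$ and Young's inequality; the third is controlled by $\|\widetilde{R}_3^{n+1}\|^2$ plus a small multiple of $\|e^{n+1}\|^2$, with residual bounds supplied by Lemma \ref{ap:03}.

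The quadratic Taylor remainder $-\tfrac{1}{2\varepsilon}(f''(\xi^{n+1})[(2e^n-e^{n-1})-\widetilde{R}_3^{n+1}]^2,e^{n+1})$ is the heart of the refined analysis. By H\"older's inequality and $|f''|\le L_2$ it is majorized by $\tfrac{L_2}{2\varepsilon}\|(2e^n-e^{n-1})-\widetilde{R}_3^{n+1}\|_{L^4}^2\|e^{n+1}\|$. I would then apply the Gagliardo--Nirenberg interpolation $\|v\|_{L^4}^2 \lesssim \|v\|^{2-d/2}\|\nabla v\|^{d/2} + \|v\|^2$ (valid for $d\le 3$) and insert the coarse estimates of Proposition \ref{prop1} together with the residual bounds of Lemma \ref{ap:03}(iii)--(iv). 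This produces an extra factor of the form $\tau^{\alpha(d)}\varepsilon^{-\beta(d)}$, and the three time-step conditions collected in \eqref{finer-error-1} are precisely what is needed to make this contribution strictly higher order than the target $\varepsilon^{-\max\{4\sigma_1+7,\sigma_0\}}\tau^4$ on the RHS.

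Finally I would assemble the refined inequality from the LHS contributions \eqref{BDF4}--\eqref{BDF5}, the bounds \eqref{BDF6}--\eqref{BDF9} on $J_1$--$J_4$ (unchanged from Proposition \ref{prop1}), and the refined handling of $J_5$, and then apply the discrete Gronwall lemma. The Gronwall constant is now $C_0\varepsilon + L + 2$, independent of $\varepsilon^{-1}$, yielding the factor $\exp(8T(C_0\varepsilon + L + 2))$ in \eqref{finer-error}. The main obstacle will be verifying that the time-step restriction \eqref{finer-error-1} indeed suffices to absorb the Gagliardo--Nirenberg-based bound on the quadratic remainder, which requires carefully tracking the polynomial powers of $\varepsilon$ that enter through the bootstrap via Proposition \ref{prop1} and the regularity bounds of Lemma \ref{ap:03}.
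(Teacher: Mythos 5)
Your overall strategy---apply the spectral bound of Lemma \ref{lemma1} to the linearized part of the nonlinearity, keep only $\varepsilon^2\|\nabla e^{n+1}\|^2$ on the left, treat the quadratic Taylor remainder by Sobolev interpolation, and use the coarse estimate as a bootstrap---matches the paper's, and your decomposition (one Taylor expansion of $f(2\phi^{n}-\phi^{n-1})-f(\phi(t^{n+1}))$ about $\phi(t^{n+1})$, versus the paper's split $J_5=J_6+J_7$ through the intermediate value $\phi^{n+1}$, which leaves the remainder in the clean form $\frac{L_2}{\varepsilon}\|e^{n+1}\|_{L^3}^3$) is a legitimate variant. The treatment of the cross term with $\delta_{tt}e^{n+1}$ via $\tau\lesssim\varepsilon^2$ is also exactly what the paper does.

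There is, however, one genuine gap: the bootstrap input. You propose to ``insert the coarse estimates of Proposition \ref{prop1}'' into the bound for the quadratic remainder, and you yourself note that those estimates carry the prefactor $\exp\bigl(80(B^2+L^2/\varepsilon^2)T\bigr)$. A contribution of size $\exp(cT/\varepsilon^2)\,\varepsilon^{-\beta}\tau^{\alpha}$ cannot be made small by any time-step restriction that is merely polynomial in $\varepsilon$, such as \eqref{finer-error-1}; you would need $\tau$ exponentially small in $1/\varepsilon^2$, which defeats the purpose of the theorem. The paper closes this circle by induction on the time level: assume the \emph{finer} estimate \eqref{finer-error} (polynomial prefactor) holds up to step $N$, invoke only the single-step, un-Gronwalled recursion \eqref{BDF12-0} to transfer polynomial smallness to step $N+1$, verify $Q^{N+1}\le\varepsilon^2/2$ and $G^{N+1}\le 1/2$ under \eqref{finer-error-1}, and only then run Gronwall with the $\varepsilon$-independent rate $C_0\varepsilon+L+2$. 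Without this induction (or an equivalent continuation/generalized-Gronwall argument, cf.\ Remark \ref{rmk:errest}), the exponents in \eqref{finer-error-1} cannot be justified. A secondary point: your Gagliardo--Nirenberg bound produces $\|\nabla(2e^{n}-e^{n-1})\|^{d/2}$ at \emph{previous} time levels, which again must be controlled through the inductive hypothesis; the paper's $L^3$-interpolation of $\|e^{n+1}\|_{L^3}^3$ keeps everything at level $n+1$, where the single-step coarse recursion applies directly.
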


\begin{proof}
  We refine the result of Proposition \ref{prop1} by
  re-estimating $J_4$ in equation \eqref{BDF3} as
  \begin{equation}\label{J4}
	\begin{split}
      J_{4}&=-B(\delta_{tt}e^{n+1},e^{n+1})
      \leq
      B^2\|\delta_{tt}e^{n+1}\|^2
      +\frac{1}{4}\| e^{n+1}\|^2,
	\end{split}
  \end{equation}
  and rewriting $J_5$ as
  \begin{align}\label{J5}
	J_5 &=  J_6 + J_7,\\
	\begin{split}\label{J6}
      J_{6}&=-\frac{1}{\varepsilon}\left(f(2\phi^{n}-\phi^{n-1})-f(\phi^{n+1}),e^{n+1}\right)\\
      &\quad\le \frac{L}{\varepsilon}(|\delta_{tt}e^{n+1}|+|\widetilde{R}_{3}^{n+1}|, |e^{n+1}|)\\
      &\quad\le \frac{L^2}{\varepsilon^2}\left(\|\delta_{tt}e^{n+1}\|^2
        +\|\widetilde{R}_{3}^{n+1}\|^2\right) + \frac{1}{2} \| e^{n+1}\|^2,
	\end{split}\\
	\begin{split}\label{J7-0}
      J_{7}&=
      -\frac{1}{\varepsilon}\left(f(\phi^{n+1})-f(\phi(t^{n+1})),e^{n+1}\right)\\
      &\quad\le -\frac{1}{\varepsilon}\left(f'(\phi(t^{n+1}))e^{n+1},e^{n+1}\right)
      +\frac{L_2}{\varepsilon}\|e^{n+1}\|_{L^3}^3.
	\end{split}
  \end{align}
  The spectrum estimate \eqref{eigen} give us
  \begin{equation}\label{spectrumBDF}
    \varepsilon \|\nabla e^{n+1}\|^{2}
    +\frac{1}{\varepsilon}(f'(\phi(t^{n+1}))e^{n+1},  e^{n+1})
    \geq -{\varepsilon}C_{0}\|e^{n+1}\|^{2}.
  \end{equation}
  Applying (\ref{spectrumBDF}) with a scaling factor $-(1-\varepsilon)$, we get
  \begin{equation}\label{spectrum1BDF}
    -(1-\varepsilon)\frac{1}{\varepsilon}(f'(\phi(t^{n+1}))e^{n+1},  e^{n+1})
    \leq C_{0}{\varepsilon}(1-\varepsilon)\|e^{n+1}\|^{2} + 
    (1-\varepsilon)\varepsilon \|\nabla e^{n+1}\|^{2}.
  \end{equation}
  On the other hand,
  \begin{equation}\label{BDF12}
    -(f'(\phi(t^{n+1}))e^{n+1},  e^{n+1})
    \leq L\|e^{n+1}\|^{2}.
  \end{equation}
  Now, we estimate the $L^{3}$ term in \eqref{J7-0} by interpolating $L^{3}$ between $L^{2}$ and $H^{1}$
  \[\|e^{n+1}\|_{L^{3}}^{3}\leq K (
  \|\nabla e^{n+1}\|^{\frac{d}{2}}
  \|e^{n+1}\|^{\frac{6-d}{2}}+
  \|e^{n+1}\|^{3}),\]
  where K is a constant independent of $\varepsilon$ and $\tau$.
  We continue the estimate by using Young's Gronwall inequality
  \begin{equation}\label{BDF13}
    \frac{L_2 }{\varepsilon}\|e^{n+1}\|^{3}_{L^{3}}
    \leq \frac{d}{6} \varepsilon^{\frac{3}{d}} \|\nabla e^{n+1}\|^{3}
    +\frac{6-d}{6}\Big{(} \frac{L_{2}K}{\varepsilon^{\frac{3}{2}}}\Big{)}^{\frac{6}{6-d}}
    \|e^{n+1}\|^{3} + \frac{L_2K}{\varepsilon}\|e^{n+1}\|^{3}.
  \end{equation}
  Substituting (\ref{spectrum1BDF}) (\ref{BDF12}) (\ref{BDF13}) into (\ref{J7-0}), we get
  \begin{multline}\label{J7}
      J_7\leq (C_{0}{\varepsilon}(1-\varepsilon)+L)\|e^{n+1}\|^{2} + 
      (1-\varepsilon)\varepsilon\|\nabla e^{n+1}\|^{2} +
      \frac{d}{6} \varepsilon^{\frac{3}{d}} \|\nabla e^{n+1}\|^{3}\\
      +\Big{(}\frac{6-d}{6}\Big{(} \frac{L_{2}K}{\varepsilon^{\frac{3}{2}}}\Big{)}^{\frac{6}{6-d}}
      + \frac{L_2K}{\varepsilon}\Big{)}\|e^{n+1}\|^{3}.
  \end{multline}

  Substituting the estimate of \eqref{BDF4}-\eqref{BDF8},
  \eqref{J4}-\eqref{J6} and
  \eqref{J7} into (\ref{BDF3}), we get
  \begin{equation}\label{BDF14}
    \begin{split}
	  &\frac{1}{4\tau}((\|e^{n+1}\|^{2}+\|2e^{n+1}-e^{n}\|^{2})
	  -(\|e^{n}\|^{2}+\|2e^{n}-e^{n-1}\|^{2}))\\
      & +\frac{1}{2}A\tau(\| e^{n+1}\|^{2}-\| e^{n}\|^{2})
      +\frac{1}{2}A\tau\|\delta_{t} e^{n+1}\|^{2}+
      \frac{1}{4\tau}\|\delta_{tt}e^{n+1}\|^{2} + \varepsilon^2 \|\nabla e^{n+1}\|^2\\
      \leq &\|\widetilde{R}^{n+1}_1\|^{2} + A^2\|\widetilde{R}_{2}^{n+1}\|^2+
      \Big{(} B^2 + \frac{L^2}{\varepsilon^2}\Big{)}\|\widetilde{R}_3^{n+1}\|^2
      \\
      &+\left( C_{0}{\varepsilon}(1-\varepsilon)+L + \frac{3}{2} 
      +G^{n+1}\right) \|e^{n+1}\|^{2}
      + \Big{(} B^2 + \frac{L^2}{\varepsilon^2}\Big{)}\|\delta_{tt}e^{n+1}\|^2\\
      &+
      Q^{n+1} \|\nabla e^{n+1}\|^{2},
    \end{split}
  \end{equation}
  where
  $Q^{n+1}=\frac{d}{6} \varepsilon^{\frac{3}{d}} \|\nabla
  e^{n+1}\|$,
  $G^{n+1}=\Big{(}\frac{6-d}{6}\Big{(}
  \frac{L_{2}K}{\varepsilon^{\frac{3}{2}}}\Big{)}^{\frac{6}{6-d}}+\frac{L_2K}{\varepsilon}\Big{)}
  \|e^{n+1}\|$.\\
  If $Q^{n+1}$ is uniformly bounded by constant
  $\frac{\varepsilon^{2}}{2}$, $G^{n+1}$ is uniformly
  bounded by constant $\frac{1}{2}$, then choose
  $\tau \leq \max \{\frac{\varepsilon^2}{4(B^2 \varepsilon^2
    + L^2)}, \frac{1}{8 \left( C_0 {\varepsilon}\left( 1-
        \varepsilon\right) +L+2 \right)} \} $,
  by Gronwall inequality and the first step error estimate
  \eqref{ap:022-2} in Assumption \ref{ap:02},
  we will get the finer error estimate (\ref{finer-error}).

  We prove this by induction. Assuming that
  the finer estimate holds for all first ${n\le} N$ time steps:
  \begin{equation}\label{finer-errorN}
    \begin{split}
      &\max_{1 \leq n \leq N}\{ \|e^{n}\|^2 + 2\|2e^{n}-e^{n-1}\|^2 + 4 A \tau^2 \|e^{n}\|^2 \} \\
      &+ 4A \tau^2 \sum_{n=1}^{N} \|\delta_t e^{n}\|^2
      + 4\tau \varepsilon^2 \sum_{n=1}^{N} \|\nabla e^{n}\|^2\\
      \leq & \exp{(8T(C_{0}{\varepsilon}+L+2))}\varepsilon^{- 
      \max\{4\sigma_1+7, \sigma_0\}} \tau^4.
    \end{split}
  \end{equation}
  Combining \eqref{finer-errorN} with the coarse estimate \eqref{BDF12-0} leads to
  \begin{equation}\label{BDF15}
	\begin{split}
      &\|e^{N+1}\|^{2}+2\|2e^{N+1}-e^{N}\|^{2}
      +4A\tau^2\| e^{N+1}\|^{2}\\
      &+4A\tau^2\|\delta_{t} e^{N+1}\|^{2}
      +8\varepsilon\tau\|\nabla e^{N+1}\|^{2}
      +2\|\delta_{tt}e^{N+1}\|^{2}
      +8B\tau \|e^{N+1}\|^2\\
      \lesssim &
      \varepsilon^{- \max\{4\sigma_1+7, \sigma_0\}} \tau^4,
      \quad N\ge 1.
    \end{split}
  \end{equation}
  Then by taking $\tau \lesssim \varepsilon^{\frac{1}{3} \max\{4\sigma_1+7, \sigma_0\}+\frac{5}{3} -\frac{2}{d}}$, we have
  \begin{equation}\label{t9}
    Q^{N+1} \lesssim
    \varepsilon^{\frac{3}{d}}\varepsilon^{-\frac{1}{2} \max\{4\sigma_1+7, \sigma_0\}-\frac{1}{2}}\tau^{\frac{3}{2}}
    \lesssim \frac{\varepsilon^{2}}{2},
  \end{equation}
  By taking $\tau \lesssim\varepsilon^{\frac{1}{4} \max\{4\sigma_1+7, \sigma_0\}+ \frac{9}{2(6-d)}}$, we have
  \begin{equation}\label{t10}
    G^{N+1}
    \lesssim  \varepsilon^{-\frac{9}{6-d}}\varepsilon^{-\frac{1}{2}\max\{4\sigma_1+7, \sigma_{0}\}}\tau^{2}
    \lesssim \frac{1}{2}.
  \end{equation}
  So, by taking step-sizes as defined in \eqref{finer-error-1},
  the finer error estimate for $N+1$ step can be obtained, and the the proof is completed by mathematical induction.
  \hfill
\end{proof}


\subsection{Convergence analysis of the SL-CN scheme} 

Similar as the error estimate of SL-BDF2 scheme, we first
present the coarse error estimate for SL-CN scheme.

\begin{prop}(Coarse error estimate)\label{prop2} Given
  Assumption \ref{ap:1} \ref{ap:02},
  $\forall \tau \lesssim \varepsilon$, following error
  estimate holds for the SL-CN scheme \eqref{accn1}.
  \begin{equation}\label{cet2-10}
    \begin{split}
      &\frac{1}{2}\|e^{n+1}\|^{2} +2\tau \varepsilon\|\nabla
      \frac{e^{n+1}+e^{n}}{2}\|^{2}
      +A\tau^{2}\|e^{n+1}\|^{2}
      +B\tau \|e^{n+1}\|^{2}\\
      \lesssim&\varepsilon^{-\max\{4\sigma_1+7,
        \sigma_{0}\}}\tau^{4} +2B\tau \|e^{n}-e^{n-1}\|^{2}
      +\frac{2L}{\varepsilon}\tau \| \frac{3}{2}e^{n}-\frac{1}{2}e^{n-1} \|^{2}\\
      &
      +\left(\frac{5}{2}+\frac{B}{2}+\frac{L}{2\varepsilon}\right)\tau
      \|e^{n}\|^{2} + 2A\tau^{2}\| e^{n}\|^{2} +2B\tau
      \|e^{n}\|^{2}, \quad\forall n\geq 1.
    \end{split}
  \end{equation}
  and
  \begin{equation}\label{cet5-1}
    \begin{split}
      &\max_{1\leq n\leq N}\left(\|e^{n+1}\|^{2}
        +2A\tau^{2}\| e^{n+1}\|^{2}
        +2B\tau\|e^{N+1}\|^{2}\right)
      +4\varepsilon \tau\sum_{n=1}^{N}\|\nabla \frac{e^{n+1}+e^{n}}{2}\|^{2}\\
      \lesssim & \mathrm{exp }\Big{(}17B+
      5+\frac{11L}{\varepsilon}\Big{)}T
      \varepsilon^{-\max\{4\sigma_1+7,
        \sigma_{0}\}}\tau^{4}.
    \end{split}
  \end{equation}
\end{prop}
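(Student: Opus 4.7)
The plan is to mirror the analysis of Proposition \ref{prop1} but adapted to Crank--Nicolson time stepping. First I will subtract the continuous Allen--Cahn equation \eqref{eq:AC} evaluated at the midpoint $t^{n+\frac{1}{2}}$ from the scheme \eqref{accn1} to obtain an error equation
\begin{equation*}
\frac{e^{n+1}-e^n}{\tau} = \varepsilon\Delta\frac{e^{n+1}+e^n}{2} - \frac{1}{\varepsilon}\bigl[f(\hat{\phi}^{n+\frac{1}{2}})-f(\phi(t^{n+\frac{1}{2}}))\bigr] - A\tau\,\delta_t e^{n+1} - B\,\delta_{tt}e^{n+1} + \mathcal{R}^{n+1},
\end{equation*}
where $\mathcal{R}^{n+1}$ collects three truncation residuals coming from (a) $\phi_t(t^{n+\frac{1}{2}})$ versus the backward difference $(\phi(t^{n+1})-\phi(t^n))/\tau$, (b) $\Delta\phi(t^{n+\frac{1}{2}})$ versus the average $\tfrac{1}{2}(\Delta\phi(t^{n+1})+\Delta\phi(t^n))$, and (c) the extrapolation residual $\tfrac{3}{2}\phi(t^n)-\tfrac{1}{2}\phi(t^{n-1})-\phi(t^{n+\frac{1}{2}})$. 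Each residual admits an $O(\tau^{3/2})$ bound in $L^2$ by Taylor expansion with integral remainder, and Lemma \ref{ap:03} then gives $\|\mathcal{R}^{n+1}\|^2 \lesssim \tau^3\varepsilon^{-(4\sigma_1+7)}$.

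Next I pair the error equation with the natural Crank--Nicolson test function $\tau(e^{n+1}+e^n)$. This converts the discrete time derivative into $\|e^{n+1}\|^2-\|e^n\|^2$, the diffusion term into $2\tau\varepsilon\|\nabla\tfrac{e^{n+1}+e^n}{2}\|^2$, and, using identity \eqref{eq:ID:1}, the $A$-stabilizer into $A\tau^2(\|e^{n+1}\|^2-\|e^n\|^2+\|\delta_t e^{n+1}\|^2)$. For the second stabilizer I rewrite $-B(\delta_{tt}e^{n+1},e^{n+1}+e^n) = -B(e^{n+1}-2e^n+e^{n-1},\,e^{n+1}+e^n)$ and split it by Cauchy--Schwarz and Young's inequality; this accounts for the $B\tau\|e^{n+1}\|^2$, $B\tau\|e^n\|^2$, and $B\tau\|e^n-e^{n-1}\|^2$ contributions visible on both sides of \eqref{cet2-10}. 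For the nonlinear difference I insert $\pm f(\phi(t^{n+\frac{1}{2}}))$ and apply the Lipschitz bound \eqref{eq:Lip},
\begin{equation*}
\bigl|f(\hat{\phi}^{n+\frac{1}{2}})-f(\phi(t^{n+\frac{1}{2}}))\bigr| \le L\bigl(\bigl|\tfrac{3}{2}e^n-\tfrac{1}{2}e^{n-1}\bigr| + \bigl|\tfrac{3}{2}\phi(t^n)-\tfrac{1}{2}\phi(t^{n-1})-\phi(t^{n+\frac{1}{2}})\bigr|\bigr),
\end{equation*}
whose second summand is yet another $O(\tau^{3/2})$ residual absorbed into $\mathcal{R}^{n+1}$. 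Gathering all contributions and using the first-step bound \eqref{ap:022-2} yields the per-step inequality \eqref{cet2-10}.

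Finally I sum \eqref{cet2-10} from $n=1$ to $N$ and apply a discrete Gronwall inequality. Because $\tau\lesssim\varepsilon$, the coefficient multiplying $\|e^n\|^2$ on the right-hand side is of order $\tau(1+B+L/\varepsilon)$, small enough for Gronwall to close the recursion and deliver the exponential prefactor $\exp\bigl((17B+5+11L/\varepsilon)T\bigr)$ together with the accumulated residual bound $\varepsilon^{-\max\{4\sigma_1+7,\sigma_0\}}\tau^4$ stated in \eqref{cet5-1}. The main technical obstacle is the $B\delta_{tt}e^{n+1}$ stabilizer: when paired with $e^{n+1}+e^n$ rather than with $\delta_t\phi^{n+1}$ as in the energy-stability proof of Theorem \ref{accn}, it no longer telescopes cleanly, so its contribution must be absorbed into the Gronwall constant instead of retained as a sign-definite good term on the left. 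This is acceptable here because the goal is only the coarse estimate, whose exponential $\varepsilon$-dependence will be refined via the spectrum estimate of Lemma \ref{lemma1}, exactly as was done for the SL-BDF2 scheme in Theorem \ref{BDFerror}.
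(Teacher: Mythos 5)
Your proposal is correct and follows essentially the same route as the paper's proof: the same error equation, the same test function $\tfrac{e^{n+1}+e^{n}}{2}$ (up to the harmless factor $2\tau$), the same Lipschitz-plus-extrapolation-residual treatment of the nonlinear term, the same Taylor/Lemma~\ref{ap:03} residual bounds, and the same discrete Gronwall step under $\tau\lesssim\varepsilon$. Two small bookkeeping points: your $\mathcal{R}^{n+1}$ must also contain the consistency errors $A\tau(\phi(t^{n+1})-\phi(t^{n}))$ and $B\,\delta_{tt}\phi(t^{n+1})$ produced by applying the stabilizers to the exact solution (the paper's $R_{2}^{n+1}$ and $R_{3}^{n+1}$), which are of the same order and absorbed identically; and the paper does in fact retain the telescoping half $-\tfrac{B}{2}(\|e^{n+1}\|^{2}-\|e^{n}\|^{2})$ of the $B$-term, which is precisely how $B\tau\|e^{n+1}\|^{2}$ appears on the left of \eqref{cet2-10}, so your closing remark that it cannot be kept as a sign-definite term slightly undersells your own (and the paper's) splitting.
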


\begin{proof}
  The following equation for the error functions holds:

  \begin{equation}\label{cet}
    \begin{split}
      \frac{e^{n+1}- e^{n}}{\tau} =&R_1^{n+1}
      +\varepsilon\Delta \frac{e^{n+1}+e^{n}}{2} -\frac{1}{\varepsilon}\Big{(}f(\frac{3}{2}\phi^{n}-\frac{1}{2}\phi^{n-1})-f(\phi(t^{n+\frac{1}{2}}))\Big{)}\\
      &-A\tau \delta_{t}e^{n+1}-A R_2^{n+1}
      -B\delta_{tt}e^{n+1} -BR_3^{n+1} +\varepsilon \Delta
      R_4^{n+1}.
    \end{split}
  \end{equation}
  where
  \begin{align}\label{e3}
    R_{1}^{n+1}&=\phi_{t}^{n+\frac{1}{2}}-\frac{\phi(t^{n+1})-\phi(t^{n}))}{\tau},\\
  \label{e5}
    R_{2}^{n+1}&=\tau(\phi(t^{n+1})-\phi(t^{n})),\\
  \label{e4} 
  R_{3}^{n+1}&=
    \phi(t^{n+1})-2\phi(t^{n})+\phi(t^{n-1}),\\
  \label{e4-4} 
  R_{4}^{n+1}&=
    \frac{\phi(t^{n+1})+\phi(t^{n})}{2}-\phi(t^{n+\frac{1}{2}}).
  \end{align}

  Pairing (\ref{cet}) with
  $ \frac{e^{n+1}+e^{n}}{2}$, we get
  \begin{equation}\label{cet2}
    \begin{split}
      &\frac{1}{2\tau }(\|e^{n+1}\|^{2}-\|e^{n}\|^{2})
      +\varepsilon\|\nabla \frac{e^{n+1}+e^{n}}{2}\|^{2}
      +\frac{A\tau}{2}(\| e^{n+1}\|^{2}-\| e^{n}\|^{2})\\
      =&\Big{(} R_{1}^{n+1},\frac{e^{n+1}+e^{n}}{2}\Big{)}
      -A\Big{(} R_{2}^{n+1},\frac{e^{n+1}+e^{n}}{2}\Big{)}
      -B \Big{(}R_{3}^{n+1},\frac{e^{n+1}+e^{n}}{2}\Big{)}\\
      &+\varepsilon \Big{(} \Delta
      R_{4}^{n+1},\frac{e^{n+1}+e^{n}}{2}\Big{)}
      -B\Big{(}\delta_{tt}e^{n+1},\frac{e^{n+1}+e^{n}}{2}\Big{)}\\
      &-\frac{1}{\varepsilon}\Big{(}f(\frac{3}{2}\phi^{n}-\frac{1}{2}\phi^{n-1})
      -f(\phi(t^{n+\frac{1}{2}})),\frac{e^{n+1}+e^{n}}{2}\Big{)}\\
      =&:J_{1}+J_{2}+J_{3}+J_{4}+J_{5}+J_{6}=:J,\\
    \end{split}
  \end{equation}

  For the right hand of (\ref{cet2}), by using
  Cauchy-Schwarz inequality, we obtain the following
  estimate:
  \begin{alignat}{2}
  \label{ej1}
    J_{1}&=\Big{(}R_{1}^{n+1},\frac{e^{n+1}+e^{n}}{2}\Big{)}
    &\leq \|R_{1}^{n+1}\|^{2}+\frac{1}{4}\| \frac{e^{n+1}+e^{n}}{2}\|^{2},\\
  \label{ej2}
    J_{2}&=A\Big{(} R_{2}^{n+1},\frac{e^{n+1}+e^{n}}{2}\Big{)}
    &\leq A^{2}\| R_{2}^{n+1}\|^{2}+\frac{1}{4}\| \frac{e^{n+1}+e^{n}}{2}\|^{2},\\
  \label{ej3}
    J_{3}&=-B \Big{(}R_{3}^{n+1},\frac{e^{n+1}+e^{n}}{2}\Big{)}
    &\leq B^{2}\|R_{3}^{n+1}\|^{2}+\frac{1}{4}\| \frac{e^{n+1}+e^{n}}{2}\|^{2},\\
  \label{ej4}
    J_{4}&=\varepsilon \Big{(} \Delta R_{4}^{n+1},\frac{e^{n+1}+e^{n}}{2}\Big{)}
    &\leq \varepsilon^{2}\|\Delta R_{4}^{n+1}\|^{2}+\frac{1}{4}\| \frac{e^{n+1}+e^{n}}{2}\|^{2},
  \end{alignat}
  For $J_{5}$ of the right side of (\ref{cet2}), by using
  the equation
  $\delta_{tt}e^{n+1}=\delta_{t}e^{n+1}-\delta_{t}e^{n}$, we
  have
  \begin{equation}\label{e11}
    \begin{split}
      J_{5}&=-B \Big{(}\delta_{tt}e^{n+1},\frac{e^{n+1}+e^{n}}{2}\Big{)}\\
      &=-\frac{B}{2}(\|e^{n+1}\|^{2}-\|e^{n}\|^{2})+\frac{B}{2}(\delta_{t}e^{n},e^{n+1}+e^{n})\\
      &\leq-\frac{B}{2}(\|e^{n+1}\|^{2}-\|e^{n}\|^{2})
      +B\|e^{n}-e^{n-1}\|^{2} + \frac{B}{4}\|
      \frac{e^{n+1}+e^{n}}{2}\|^{2}.
    \end{split}
  \end{equation}

  \begin{equation}\label{e12}
    \begin{split}
      J_{6}&=-\frac{1}{\varepsilon}\Big{(}f(\frac{3}{2}\phi^{n}-\frac{1}{2}\phi^{n-1})-f(\phi(t^{n+\frac{1}{2}})),\frac{e^{n+1}+e^{n}}{2}\Big{)}\\
      &\leq \frac{L}{\varepsilon}\Big{(}| R_{5}^{n+1}|,|\frac{e^{n+1}+e^{n}}{2}|\Big{)}+ \frac{L}{\varepsilon}\Big{(} |\frac{3}{2}e^{n}-\frac{1}{2}e^{n-1}|,|\frac{e^{n+1}+e^{n}}{2}|\Big{)}\\
      &\leq \frac{L^{2}}{\varepsilon^{2}}\|R_{5}^{n+1}\|^{2}
      +\frac{1}{4}\| \frac{e^{n+1}+e^{n}}{2}\|^{2}
      +\frac{L}{\varepsilon}\|
      \frac{3}{2}e^{n}-\frac{1}{2}e^{n-1} \|^{2}
      +\frac{L}{4\varepsilon}\|
      \frac{e^{n+1}+e^{n}}{2}\|^{2},
    \end{split}
  \end{equation}
  where
  \begin{equation} \label{e5-5} {R_{5}^{n+1}=}
    \frac{3}{2}\phi(t^{n})-\frac{1}{2}\phi(t^{n-1})-\phi(t^{n+\frac{1}{2}}).
  \end{equation}

  Substituting $J_{1},\cdots, J_{6}$ into (\ref{cet2}), we
  have
  \begin{equation}\label{cet2-1}
    \begin{split}
      &\frac{1}{2\tau }(\|e^{n+1}\|^{2}-\|e^{n}\|^{2})
      +\varepsilon\|\nabla \frac{e^{n+1}+e^{n}}{2}\|^{2}
      +\frac{A\tau}{2}(\|e^{n+1}\|^{2}-\| e^{n}\|^{2})\\
      &+\frac{B}{2}(\|e^{n+1}\|^{2}-\|e^{n}\|^{2})\\
      \leq &\left(\|R_{1}^{n+1}\|^{2} + A^{2}\|
        R_{2}^{n+1}\|^{2}
        +B^{2}\|R_{3}^{n+1}\|^{2}+\varepsilon^{2}\|\Delta
        R_{4}^{n+1}\|^{2}
        +\frac{L^{2}}{\varepsilon^{2}}\|R_{5}^{n+1}\|^{2} \right)\\
      &+B\|e^{n}-e^{n-1}\|^{2} +\frac{L}{\varepsilon}\|
      \frac{3}{2}e^{n}-\frac{1}{2}e^{n-1} \|^{2}
      +\Big{(}\frac{5}{4}+\frac{B}{4}+\frac{L}{4\varepsilon}\Big{)}\|\frac{e^{n+1}+e^{n}}{2}\|^{2},
    \end{split}
  \end{equation}
  By using Taylor expansions in integral form, one can get
  estimates for the residuals
  \begin{align} 
  	\label{R1} 
  	\|R_{1}^{n+1}\|^{2}\leq&
    \tau^{3}\int_{t^{n}}^{t^{n+1}}\|\phi_{tt}(t)\|^{2}{\rm
      d}t \lesssim \varepsilon^{-2\sigma_1-2} \tau^3,\\
  \label{R2} 
  \| R_{2}^{n+1}\|^{2}\leq&
    \tau^{3}\int_{t^{n}}^{t^{n+1}}\| \phi_t(t)\|^{2}{\rm d}t
    \lesssim \varepsilon^{-2\sigma_1} \tau^3,\\
  \label{R3}
    \|R_{3}^{n+1}\|^{2}\leq& 6\tau^{3}\int_{t^{n-1}}^{t^{n+1}}\|\phi_{tt}(t)\|^{2}{\rm d}t
    \lesssim  \varepsilon^{-2\sigma_1-2} \tau^3,\\
  \label{R4}
    \|\Delta R_{4}^{n+1}\|^{2}\leq&
    \tau^{3}\int_{t^{n}}^{t^{n+1}}\| \Delta \phi_{tt}(t)\|^{2}{\rm d}t
    \lesssim \varepsilon^{-4\sigma_1-9} \tau^3,\\
  \label{R5}
    \|R_{5}^{n+1}\|^{2}\leq&
    \tau^{3}\int_{t^{n-1}}^{t^{n+1}}\|\phi_{tt}(t)\|^{2}{\rm d}t
    \lesssim \varepsilon^{-2\sigma_1-2} \tau^3.
  \end{align}
  Taking
  $\tau<1/\big{(}\frac{5}{2}+\frac{B}{2}+\frac{L}{2\varepsilon}\big{)}\lesssim\varepsilon$,
  combining \eqref{R1}-\eqref{R5} and the error assumption
  of the first step, by using a discrete Gronwall inequality,
  one get (\ref{cet5-1}). \eqref{cet2-10} is obtained without
  using Gronwall inequality.  \hfill
\end{proof}

Proposition \ref{prop2} is the usual error estimate, in
which the error growth depends on $1/\varepsilon$
exponentially.  Next, we give a finer error estimate by
using Lemma \ref{lemma1}.

\begin{thm}\label{CNerror}
  Suppose all of the Assumption \ref{ap:1},\ref{ap:02}
  hold. Let $\tau$ satisfy the following constraint
  \begin{equation}\label{error-1}
    \tau \lesssim \min \left\{ \varepsilon^{2},\varepsilon^{\frac{1}{3}\max\{4\sigma_1+11, \sigma_{0}\}+\frac{5}{3}-\frac{2}{d}},
      \varepsilon^{\frac{1}{4}\max\{4\sigma_1+11, \sigma_{0}\}+ \frac{9}{2(6-d)}}\right\},
 \end{equation}
 then the solution of (\ref{accn1}) satisfies the following
 error estimate
 \begin{equation}\label{error-2}
   \begin{split}
     &\max_{1\leq n \leq
       N}\left\{\|e^{n+1}\|^{2}+2\varepsilon \tau \|\nabla
       e^{n+1}\|^{2}
       +\left( 2B+8B^{2}+\frac{2L}{\varepsilon}+ \frac{2L^{2}}{ \varepsilon^{2}}\right)\tau \|\delta_{t}e^{n+1}\|^{2} \right\}\\
     & +2\varepsilon^2 \tau\sum_{n=1}^{N}\|\nabla \frac{e^{n+1}+e^{n}}{2}\|^{2}\\
     \lesssim & \exp ((12+ 4C_{0}{\varepsilon} +4L)T)
     \varepsilon^{-\max\{4\sigma_1+11, \sigma_{0}\}}
     \tau^{4}.\\
   \end{split}
 \end{equation}
\end{thm}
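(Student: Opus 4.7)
The plan is to mirror the strategy used for Theorem \ref{BDFerror}: start from the coarse identity \eqref{cet2-1}, keep the gradient dissipation $\varepsilon\|\nabla(e^{n+1}+e^{n})/2\|^{2}$ intact, refine only the nonlinear term $J_{6}$ so that a piece amenable to the spectrum estimate \eqref{eigen} at $t^{n+1/2}$ is exposed, and close by induction against the coarse bound \eqref{cet5-1}.

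The first step is to split $J_{6}=J_{6}^{a}+J_{6}^{b}$, where
$$
J_{6}^{a}=-\tfrac{1}{\varepsilon}\Big(f\big(\tfrac{3}{2}\phi^{n}-\tfrac{1}{2}\phi^{n-1}\big)-f\big(\tfrac{\phi^{n+1}+\phi^{n}}{2}\big),\,\tfrac{e^{n+1}+e^{n}}{2}\Big),
$$
and $J_{6}^{b}$ is the analogous difference with $\phi(t^{n+1/2})$ replacing the extrapolation. The algebraic identity
$$
\tfrac{3}{2}\phi^{n}-\tfrac{1}{2}\phi^{n-1}-\tfrac{\phi^{n+1}+\phi^{n}}{2}=-\tfrac{1}{2}\delta_{tt}\phi^{n+1}=-\tfrac{1}{2}(\delta_{tt}e^{n+1}+R_{3}^{n+1})
$$
together with the Lipschitz bound on $f'$ controls $J_{6}^{a}$ by $\tfrac{L^{2}}{\varepsilon^{2}}(\|\delta_{tt}e^{n+1}\|^{2}+\|R_{3}^{n+1}\|^{2})$ up to a small multiple of $\|(e^{n+1}+e^{n})/2\|^{2}$. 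Expanding $\|\delta_{tt}e^{n+1}\|^{2}\le 2\|\delta_{t}e^{n+1}\|^{2}+2\|\delta_{t}e^{n}\|^{2}$ accounts for the $(2B+8B^{2}+2L/\varepsilon+2L^{2}/\varepsilon^{2})\tau\|\delta_{t}e^{n+1}\|^{2}$ quantity appearing on the left of \eqref{error-2}.

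For $J_{6}^{b}$, I Taylor-expand $f$ at $\phi(t^{n+1/2})$, using $f\in C^{4}$ with $\|f''\|_{\infty}\le L_{2}$ and the identity $\tfrac{\phi^{n+1}+\phi^{n}}{2}-\phi(t^{n+1/2})=\tfrac{e^{n+1}+e^{n}}{2}+R_{4}^{n+1}$, to obtain
$$
J_{6}^{b}\le -\tfrac{1}{\varepsilon}\Big(f'(\phi(t^{n+1/2}))\tfrac{e^{n+1}+e^{n}}{2},\tfrac{e^{n+1}+e^{n}}{2}\Big)+\tfrac{L_{2}}{\varepsilon}\big\|\tfrac{e^{n+1}+e^{n}}{2}\big\|_{L^{3}}^{3}+\tfrac{L^{2}}{\varepsilon^{2}}\|R_{4}^{n+1}\|^{2}+\cdots,
$$
with easily absorbed cross terms in the ellipsis. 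The leading quadratic yields to Lemma \ref{lemma1} at $t=t^{n+1/2}$ applied to $v=(e^{n+1}+e^{n})/2$: scaling by $(1-\varepsilon)$ and combining with the trivial Lipschitz bound $-(f'v,v)/\varepsilon\le L\|v\|^{2}/\varepsilon$ weighted by $\varepsilon$ gives $-\tfrac{1}{\varepsilon}(f'(\phi(t^{n+1/2}))v,v)\le C_{0}\varepsilon(1-\varepsilon)\|v\|^{2}+L\|v\|^{2}+(1-\varepsilon)\varepsilon\|\nabla v\|^{2}$, leaving an $\varepsilon^{2}\|\nabla v\|^{2}$ dissipation on the left that matches the corresponding term in \eqref{error-2}. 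The cubic $L^{3}$ piece is handled by the Gagliardo--Nirenberg interpolation $\|v\|_{L^{3}}^{3}\lesssim \|\nabla v\|^{d/2}\|v\|^{(6-d)/2}+\|v\|^{3}$ and Young's inequality, exactly as in \eqref{BDF13}.

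The argument is then closed by induction on $n$. Under the induction hypothesis that \eqref{error-2} holds through step $n\le N$, combining it with the coarse bound \eqref{cet2-10} provides uniform $L^{\infty}$-in-time smallness of $\|e^{n+1}\|$ and $\|\nabla v\|$ through step $N$. The time-step restriction \eqref{error-1} then guarantees, by the same computations as \eqref{t9}--\eqref{t10}, that the two cubic prefactors are bounded by $\varepsilon^{2}/2$ and $1/2$, respectively, so the cubic excess is absorbed into the dissipation and $\tfrac{1}{2}\|v\|^{2}$. A discrete Gronwall inequality, driven by the residual bounds \eqref{R1}--\eqref{R5}, yields \eqref{error-2} at step $N+1$. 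The main obstacle is bookkeeping the polynomial exponent: the worst residual $\varepsilon^{2}\|\Delta R_{4}^{n+1}\|^{2}\lesssim\varepsilon^{-4\sigma_{1}-7}\tau^{3}$ entering $J_{4}$, together with the new $\tfrac{L^{2}}{\varepsilon^{2}}\|R_{4}^{n+1}\|^{2}$ term from $J_{6}^{b}$ and the $A^{2}\|R_{2}^{n+1}\|^{2}$, $B^{2}\|R_{3}^{n+1}\|^{2}$ pieces already present in \eqref{cet2-1}, must be combined carefully (with no hidden extra $\varepsilon^{-1}$ factors leaking from the $(1-\varepsilon)$-scaling of the spectral estimate) to produce precisely the exponent $\max\{4\sigma_{1}+11,\sigma_{0}\}$ and the constraint \eqref{error-1}.
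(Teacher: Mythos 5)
Your outline of the nonlinear term is faithful to the paper: the split of $J_{6}$ into the extrapolation-vs-midpoint piece and the midpoint-vs-exact piece, the identity $\tfrac32\phi^{n}-\tfrac12\phi^{n-1}-\tfrac{\phi^{n+1}+\phi^{n}}{2}=-\tfrac12\delta_{tt}\phi^{n+1}$, the spectrum estimate with the $(1-\varepsilon)$ scaling, the $L^{3}$ interpolation, and the induction closed against the coarse bound are all exactly what the paper does. But there is a genuine gap in how you dispose of the $\|\delta_{tt}e^{n+1}\|^{2}$ terms that the refinement places on the \emph{right-hand} side (with coefficient $B^{2}+\tfrac{L^{2}}{4\varepsilon^{2}}$, coming from the re-estimated $J_{5}$ and from $J_{6}^{a}$). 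You propose to expand $\|\delta_{tt}e^{n+1}\|^{2}\le 2\|\delta_{t}e^{n+1}\|^{2}+2\|\delta_{t}e^{n}\|^{2}$ and claim this ``accounts for'' the $\delta_{t}$ term on the left of \eqref{error-2}. It does not: unlike the BDF2 case, where testing with $e^{n+1}$ produces a free $\tfrac{1}{4\tau}\|\delta_{tt}e^{n+1}\|^{2}$ on the left via \eqref{eq:ID:2}, the Crank--Nicolson test function $\tfrac{e^{n+1}+e^{n}}{2}$ yields only $\tfrac{1}{2\tau}(\|e^{n+1}\|^{2}-\|e^{n}\|^{2})$ and no dissipation in $\delta_{t}e$ or $\delta_{tt}e$ whatsoever. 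After your expansion you are left with an uncompensated $O(L^{2}/\varepsilon^{2})\|\delta_{t}e^{n+1}\|^{2}$ on the right; feeding it to Gronwall through $\|\delta_{t}e^{n+1}\|^{2}\lesssim\|e^{n+1}\|^{2}+\|e^{n}\|^{2}$ puts $L^{2}/\varepsilon^{2}$ into the exponential rate and destroys precisely the $\varepsilon$-independence of the prefactor that the theorem asserts.

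The paper's fix is a \emph{second} energy estimate: it also pairs the error equation \eqref{cet} with $\delta_{t}e^{n+1}$ (equations \eqref{t0}--\eqref{t6}), which produces the dissipation $\tfrac{1}{\tau}\|\delta_{t}e^{n+1}\|^{2}$, the telescoping blocks $\tfrac{\varepsilon}{2}(\|\nabla e^{n+1}\|^{2}-\|\nabla e^{n}\|^{2})$ and $(\tfrac{B}{2}+2B^{2}+\tfrac{L}{2\varepsilon}+\tfrac{L^{2}}{2\varepsilon^{2}})(\|\delta_{t}e^{n+1}\|^{2}-\|\delta_{t}e^{n}\|^{2})$, and hence both the $2\varepsilon\tau\|\nabla e^{n+1}\|^{2}$ and the $(2B+8B^{2}+\tfrac{2L}{\varepsilon}+\tfrac{2L^{2}}{\varepsilon^{2}})\tau\|\delta_{t}e^{n+1}\|^{2}$ entries on the left of \eqref{error-2} --- neither of which your single test can generate. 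The combined inequality \eqref{t7-1} then absorbs the $O(B^{2}+L^{2}/\varepsilon^{2})\|\delta_{t}e^{n+1}\|^{2}$ right-hand terms into $\tfrac{1}{\tau}\|\delta_{t}e^{n+1}\|^{2}$ under the restriction $\tau\lesssim\varepsilon^{2}$, which is exactly where that first entry of \eqref{error-1} is used. Without this second test your argument cannot close with the stated rate.
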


\begin{proof}
  To get a better convergence results, we re-estimate $J_5$ in \eqref{e11}
  as
  \begin{equation}\label{e111}
    J_{5}= -B \left(\delta_{tt}e^{n+1},\frac{e^{n+1}+e^{n}}{2}\right)
    \leq B^{2}\|\delta_{tt}e^{n+1}\|^{2}+\frac{1}{4}\| \frac{e^{n+1}+e^{n}}{2}\|^{2}.
  \end{equation}

  For $J_{6}$, we have
  \begin{align}\label{j1}
    J_{6}=&-\frac{1}{\varepsilon}\Big{(}f(\frac{3}{2}\phi^{n}-\frac{1}{2}\phi^{n-1})-f(\phi(t^{n+\frac{1}{2}})),\frac{e^{n+1}+e^{n}}{2}\Big{)}\\
    =&-\frac{1}{\varepsilon}\Big{(}f(\frac{3}{2}\phi^{n}-\frac{1}{2}\phi^{n-1})-f(\frac{\phi^{n+1}+\phi^{n}}{2}),\frac{e^{n+1}+e^{n}}{2}\Big{)}\\
          &-\frac{1}{\varepsilon}\Big{(}f(\frac{\phi^{n+1}+\phi^{n}}{2})-f(\phi(t^{n+\frac{1}{2}})),\frac{e^{n+1}+e^{n}}{2}\Big{)}\\
    :=&J_{7}+J_{8}.
  \end{align}

  \begin{align}\label{j2}
    J_{7}
    =&-\frac{1}{\varepsilon}\Big{(}f(\frac{3}{2}\phi^{n}-\frac{1}{2}\phi^{n-1})-f(\frac{\phi^{n+1}+\phi^{n}}{2}),\frac{e^{n+1}+e^{n}}{2}\Big{)}\\
    \leq & \frac{L}{2 \varepsilon}\Big{(}| \delta_{tt}\phi^{n+1}|,|\frac{e^{n+1}+e^{n}}{2}|\Big{)}\\
    =&  \frac{L}{2 \varepsilon}\Big{(}| \delta_{tt}e^{n+1}+R_{3}^{n+1}|,|\frac{e^{n+1}+e^{n}}{2}|\Big{)}\\
    \leq & \frac{L^{2}}{4 \varepsilon^{2} }\|\delta_{tt}e^{n+1}\|^{2}+\frac{L^{2}}{4 \varepsilon^{2}}\|R_{3}^{n+1}\|^{2} +\frac{1}{2}\|\frac{e^{n+1}+e^{n}}{2}\|^{2}.
  \end{align}

  By Taylor expansion, there exist
  $\vartheta^{n+1} \in \big{(}\frac{\phi^{n+1}+\phi^{n}}{2},
  \phi(t^{n+\frac{1}{2}})\big{)}$ such that
\begin{equation}\label{j3}
  \begin{split}
    J_{8}
    =&-\frac{1}{\varepsilon}\Big{(}f(\frac{\phi^{n+1}+\phi^{n}}{2})-f(\phi(t^{n+\frac{1}{2}})),\frac{e^{n+1}+e^{n}}{2}\Big{)}\\
    =&-\frac{1}{\varepsilon}\Big{(}f'(\phi(t^{n+\frac{1}{2}})) \Big{(}\frac{e^{n+1}+e^{n}}{2}+R_{4}^{n+1}\Big{)},\frac{e^{n+1}+e^{n}}{2}\Big{)}\\
    &-\frac{1}{2\varepsilon}\Big{(}f''(\vartheta^{n+1}) \Big{(}\frac{e^{n+1}+e^{n}}{2}+R_{4}^{n+1}\Big{)}^{2},\frac{e^{n+1}+e^{n}}{2}\Big{)}\\
    \leq
    &-\frac{1}{\varepsilon}\Big{(}f'(\phi(t^{n+\frac{1}{2}}))
    \frac{e^{n+1}+e^{n}}{2},\frac{e^{n+1}+e^{n}}{2}\Big{)}
    +\frac{L_{2}}{\varepsilon}\|\frac{e^{n+1}+e^{n}}{2}\|^{3}_{L^{3}}\\
    &+\frac{1}{
      \varepsilon^{2}}C_{2}\|R_{4}^{n+1}\|^{2}+\frac{1}{2}\|
    \frac{e^{n+1}+e^{n}}{2}\|^{2},
  \end{split}
\end{equation}
where
$ L^{2}+4L^{2}_{2}\| \phi(t)\|_{\infty}^{2} \leq
L^{2}+4L^{2}_{2} C^{2}=:C_{2}$.
For the first term of right hand of (\ref{j3}), we use the spectrum estimate
(\ref{eigen}) to get
\begin{equation}\label{spectrum}
  \varepsilon \|\nabla\frac{e^{n+1}+e^{n}}{2}\|^{2}_{L^{2}}
  +\frac{1}{\varepsilon}\Big{(}f'(\phi(t^{n+\frac{1}{2}}))\frac{e^{n+1}+e^{n}}{2}, \frac{e^{n+1}+e^{n}}{2}\Big{)}
  \geq -C_{0}{\varepsilon}\|\frac{e^{n+1}+e^{n}}{2}\|^{2}.
\end{equation}
Applying (\ref{spectrum}) with a scaling factor
$(1-\varepsilon)$, we get
\begin{equation}\label{spectrum1}
\begin{split}
  &-(1-\varepsilon)\frac{1}{\varepsilon}\Big{(}f'(\phi(t^{n+1}))\frac{e^{n+1}+e^{n}}{2},  \frac{e^{n+1}+e^{n}}{2}\Big{)}\\
  \leq&
  C_{0}{\varepsilon}(1-\varepsilon)\|\frac{e^{n+1}+e^{n}}{2}\|^{2}+(1-\varepsilon)\varepsilon\|\nabla
  \frac{e^{n+1}+e^{n}}{2}\|^{2}.
  \end{split}
\end{equation}
On the other hand,
\begin{equation}\label{tt9}
  -\Big{(}f'(\phi(t^{n+1}))\frac{e^{n+1}+e^{n}}{2},  \frac{e^{n+1}+e^{n}}{2}\Big{)}
  \leq L\|\frac{e^{n+1}+e^{n}}{2}\|^{2}.
\end{equation}
Now, we estimate the $L^{3}$ term. By interpolating $L^{3}$
between $L^{2}$ and $H^{1}$, we get
\[\|\frac{e^{n+1}+e^{n}}{2}\|_{L^{3}}^{3}\leq K \Big{(}
\|\nabla \frac{e^{n+1}+e^{n}}{2}\|^{\frac{d}{2}}
\|\frac{e^{n+1}+e^{n}}{2}\|^{\frac{6-d}{2}}+
\|\frac{e^{n+1}+e^{n}}{2}\|^{3}\Big{)},\]
where K is a constant independ of $\varepsilon$ and $\tau$.
We continue the estimate by using Young's Gronwall
inequality
\begin{equation}\label{tt10}
  \begin{split}
    &\frac{L_{2}}{\varepsilon}K \Big{(} \|\nabla
    \frac{e^{n+1}+e^{n}}{2}\|^{\frac{d}{2}}
    \|\frac{e^{n+1}+e^{n}}{2}\|^{\frac{6-d}{2}}\Big{)}\\
    \leq &\frac{d}{6} \varepsilon^{\frac{3}{d}} \|\nabla
    \frac{e^{n+1}+e^{n}}{2}\|^{3} +\frac{6-d}{6}\Big{(}
    \frac{L_{2}K}{\varepsilon^{\frac{3}{2}}}\Big{)}^{\frac{6}{6-d}}
    \|\frac{e^{n+1}+e^{n}}{2}\|^{3}.
  \end{split}
\end{equation}
Substituting (\ref{spectrum1}) (\ref{tt9}) (\ref{tt10}) into
(\ref{j3}), we get
\begin{equation}\label{j3-1}
\begin{split}
  J_{8}\leq 
  &(C_{0}{\varepsilon}(1-\varepsilon)+L)\|\frac{e^{n+1}+e^{n}}{2}\|^{2}+(1-\varepsilon)\varepsilon\|\nabla
   \frac{e^{n+1}+e^{n}}{2}\|^{2}\\
  &+\frac{d}{6}\varepsilon^{\frac{3}{d}}\|\nabla\frac{e^{n+1}+e^{n}}{2}\|^{3}
  +\Big{(}\frac{6-d}{6}\Big{(}\frac{L_{2}K}{\varepsilon^\frac{3}{2}}\Big{)}^{\frac{6}{6-d}}+
  \frac{L_2}{\varepsilon}K\Big{)}\|\frac{e^{n+1}+e^{n}}{2}\|^{3}\\
  &+\frac{1}{
    \varepsilon^{2}}C_{2}\|R_{4}^{n+1}\|^{2}+\frac{1}{2}\|
  \frac{e^{n+1}+e^{n}}{2}\|^{2}.
\end{split}
\end{equation}

Substituting $J_{1},\cdots, J_{8}$ into (\ref{cet2}), we have
\begin{equation}\label{j4}
  \begin{split}
    &\frac{1}{2\tau }(\|e^{n+1}\|^{2}-\|e^{n}\|^{2})
    +\frac{A\tau}{2}(\| e^{n+1}\|^{2}-\| e^{n}\|^{2})
    +\varepsilon^2\|\nabla \frac{e^{n+1}+e^{n}}{2}\|^{2}\\
    \leq & \|R_{1}^{n+1}\|^{2} + A^{2}\|
      R_{2}^{n+1}\|^{2}
      +B^{2}\|R_{3}^{n+1}\|^{2}+\varepsilon^{2}\|\Delta
      R_{4}^{n+1}\|^{2} +\frac{L^{2}}{4
        \varepsilon^{2}}\|R_{3}^{n+1}\|^{2} +\frac{C_{2}}{
        \varepsilon^{2}}\|R_{4}^{n+1}\|^{2}\\
    &+\frac{9}{4}\| \frac{e^{n+1}+e^{n}}{2}\|^{2}
    +\Big{(}B^{2}+ \frac{L^{2}}{4 \varepsilon^{2}
    }\Big{)}\|\delta_{tt}e^{n+1}\|^{2}
    +(C_{0}{\varepsilon}(1-\varepsilon)+L)\|\frac{e^{n+1}+e^{n}}{2}\|^{2}\\
    & +\frac{d}{6}\varepsilon^{\frac{3}{d}}\|\nabla
    \frac{e^{n+1}+e^{n}}{2}\|^{3}
    +\Big{(}\frac{6-d}{6}\Big{(}
    \frac{L_{2}K}{\varepsilon^{\frac{3}{2}}}\Big{)}^{\frac{6}{6-d}}
    +\frac{L_2K}{\varepsilon}\Big{)}\|\frac{e^{n+1}+e^{n}}{2}\|^{3}.
   \end{split}
 \end{equation}
 To control the 8th term of the right hand side, we pair
 (\ref{cet}) with $\delta_{t}e^{n+1}$ to get
 \begin{equation}\label{t0}
   \begin{split}
     &\frac{1}{\tau}\|\delta_{t}e^{n+1}\|^{2}
     +\frac{\varepsilon}{2}(\|\nabla e^{n+1}\|^{2}-\| \nabla
     e^{n}\|^{2})
     +A\tau\| \delta_{t}e^{n+1}\|^{2}\\
     &+\frac{B}{2}(\|\delta_{t}e^{n+1}\|^{2}-\|\delta_{t}e^{n}\|^{2}+\|\delta_{tt}e^{n+1}\|^{2})\\
     =&(R_{1}^{n+1},\delta_{t}e^{n+1}) -A(
     R_{2}^{n+1},\delta_{t}e^{n+1})
     -B (R_{3}^{n+1},\delta_{t}e^{n+1})\\
     &+\varepsilon ( \Delta R_{4}^{n+1},\delta_{t}e^{n+1})
     -\frac{1}{\varepsilon}\Big{(}f(\frac{3}{2}\phi^{n}-\frac{1}{2}\phi^{n-1})-f(\phi(t^{n+\frac{1}{2}})),\delta_{t}e^{n+1}\Big{)}\\
     =&:\widetilde{J}_{1}+\widetilde{J}_{2}+\widetilde{J}_{3}+\widetilde{J}_{4}+\widetilde{J}_{5}=:\widetilde{J},\ \ \ \rm{n\geq1}.\\
   \end{split}
 \end{equation}
 Analogously, applying the method for $J_{1}, \cdots, J_{4}$
 to $\widetilde{J}_{1}, \cdots, \widetilde{J}_{4}$, yields
 \begin{align}
 \label{t1}
     \widetilde{J}_{1}=&(R_{1}^{n+1},\delta_{t}e^{n+1}) \leq
     \varepsilon\|R_{1}^{n+1}\|^{2}+\frac{1}{4\varepsilon}\|\delta_{t}e^{n+1}\|^{2},\\
\label{t2}
     \widetilde{J}_{2}=&-A( R_{2}^{n+1}, \delta_{t}e^{n+1})
     \leq A^{2}\varepsilon\|
     R_{2}^{n+1}\|^{2}+\frac{1}{4\varepsilon}\|\delta_{t}e^{n+1}\|^{2},\\
 \label{t3}
     \widetilde{J}_{3}=&B (R_{3}^{n+1}, \delta_{t}e^{n+1})
     \leq B^{2}\varepsilon\|
     R_{3}^{n+1}\|^{2}+\frac{1}{4\varepsilon}\|\delta_{t}e^{n+1}\|^{2},\\
 \label{t4}
     \widetilde{J}_{4}=&\varepsilon ( \Delta
     R_{4}^{n+1},\delta_{t}e^{n+1}) \leq
     \varepsilon^{3}\|\Delta
     R_{4}^{n+1}\|^{2}+\frac{1}{4\varepsilon}\|\delta_{t}e^{n+1}\|^{2}.
 \end{align}
 For $\widetilde{J}_{5}$ of (\ref{t0}), we have
  \begin{equation}\label{t5}
    \begin{split}
      \widetilde{J}_{5}=
      &-\frac{1}{\varepsilon}\Big{(}f(\frac{3}{2}\phi^{n}-\frac{1}{2}\phi^{n-1})-f(\phi(t^{n+\frac{1}{2}})),
      \delta_{t}e^{n+1}\Big{)}\\
      \leq &
      -\frac{1}{\varepsilon}\Big{(}f'(\xi^{n+1})\Big{(}
      -\frac{1}{2}\delta_{tt} e^{n+1}-
      \frac{1}{2}R_{3}^{n+1}
      +\frac{e^{n+1}+e^{n}}{2}+R_{4}^{n+1}\Big{)},\delta_{t}e^{n+1} \Big{)}\\
      \leq & \frac{L}{2\varepsilon}\Big{(}
      \frac{3}{2}\|\delta_{t} e^{n+1}\|^{2} +\|\delta_{t}
      e^{n}\|^{2} \Big{)} +
      \frac{L^{2}}{4\varepsilon}\| R_{3}^{n+1}\|^{2}
      +\frac{L^{2}}{ \varepsilon}\| R_{4}^{n+1}\|^{2}
      +\frac{1}{2\varepsilon}\|\delta_{t}e^{n+1}\|^{2}\\
      &+\frac{1}{
        4}\|\frac{e^{n+1}+e^{n}}{2}\|^2+\frac{L^2}{\varepsilon^2}\|\delta_{t}e^{n+1}\|^2.\\
   \end{split}
 \end{equation}

 Substituting $\widetilde{J}_{1},\cdots, \widetilde{J}_{5}$
 into (\ref{t0}), we have
 \begin{equation}\label{t6}
   \begin{split}
     &\frac{1}{\tau}\|\delta_{t}e^{n+1}\|^{2}
     +\frac{\varepsilon}{2}(\|\nabla e^{n+1}\|^{2}-\| \nabla
     e^{n}\|^{2})
     +\frac{L}{2\varepsilon}
     (\|\delta_{t}e^{n+1}\|^{2}-\|\delta_{t}e^{n}\|^{2})\\
     &+A\tau\| \delta_{t}e^{n+1}\|^{2}
     +\frac{B}{2}(\|\delta_{t}e^{n+1}\|^{2}-\|\delta_{t}e^{n}\|^{2}+\|\delta_{tt}e^{n+1}\|^{2})\\
     \leq& \varepsilon\|R_{1}^{n+1}\|^{2}
     +A^{2}\varepsilon\| R_{2}^{n+1}\|^{2}
     +B^{2}\varepsilon\| R_{3}^{n+1}\|^{2}+
     \varepsilon^{3}\|\Delta R_{4}^{n+1}\|^{2}
     +\frac{L^{2}}{4\varepsilon}\| R_{3}^{n+1}\|^{2}\\
     &+\frac{L^{2}}{ \varepsilon}\| R_{4}^{n+1}\|^{2}
     +\Big{(}\frac{5L}{4\varepsilon}+\frac{3}{2\varepsilon}
     + \frac{L^2}{
       \varepsilon^2}\Big{)}\|\delta_{t}e^{n+1}\|^{2}
     +\frac{1}{4}\|\frac{e^{n+1}+e^{n}}{2}\|^2.
   \end{split}
 \end{equation}
 By combining (\ref{j4}) and (\ref{t6}), we get
 \begin{equation}\label{t7-1}
   \begin{split}
     &\frac{1}{2\tau }(\|e^{n+1}\|^{2}-\|e^{n}\|^{2})+
     \frac{\varepsilon}{2}(\|\nabla e^{n+1}\|^{2}-\| \nabla
     e^{n}\|^{2})
     +\frac{A\tau}{2}(\| e^{n+1}\|^{2}-\| e^{n}\|^{2})\\
     & +\Big{(}\frac{ B}{2}+2B^{2}+\frac{L}{2\varepsilon}+
     \frac{L^{2}}{ 2\varepsilon^{2}}\Big{)}(\|\delta_{t}e^{n+1}\|^{2} - \|\delta_{t}e^{n}\|^{2})\\
     &+A\tau\|
     \delta_{t}e^{n+1}\|^{2}+\frac{B}{2}\|\delta_{tt}e^{n+1}\|^{2}
     +\frac{1}{\tau}\|\delta_{t}e^{n+1}\|^{2}
     +\varepsilon^{2}\|\nabla \frac{e^{n+1}+e^{n}}{2}\|^{2}\\
     \leq & (1+\varepsilon)\left(\|R_{1}^{n+1}\|^{2} +
       A^{2}\| R_{2}^{n+1}\|^{2}
       +B^{2}\|R_{3}^{n+1}\|^{2}+\varepsilon^{2}\|\Delta
       R_{4}^{n+1}\|^{2}
       +\frac{L^{2}}{4 \varepsilon^{2}}\|R_{3}^{n+1}\|^{2} \right)\\
     &+\left( \frac{C_{2}}{ \varepsilon^{2}}+\frac{L^{2}}{
         \varepsilon} \right) \| R_{4}^{n+1}\|^{2}
     +\Big{(}4B^{2}+2 \frac{L^{2}}{ \varepsilon^{2} }
     +\frac{5L}{4\varepsilon}
     +\frac{3}{2\varepsilon} \Big{)}\|\delta_{t}e^{n+1}\|^{2}\\
     &+\Big{(}\frac{5}{2}+
     C_{0}{\varepsilon}(1-\varepsilon)+L\Big{)}\|\frac{e^{n+1}+e^{n}}{2}\|^{2}
     +G^{n+1}\|\frac{e^{n+1}+e^{n}}{2}\|^{2}\\
     &+Q^{n+1}\|\nabla \frac{e^{n+1}+e^{n}}{2}\|^{2},
   \end{split}
 \end{equation}
 where
 $Q^{n+1}=\frac{d}{6} \varepsilon^{\frac{3}{d}}\|\nabla
 \frac{e^{n+1}+e^{n}}{2}\|$,
 $G^{n+1}=\Big{(}\frac{6-d}{6}\Big{(}
 \frac{L_{2}K}{\varepsilon^{\frac{3}{2}}}
 \Big{)}^{\frac{6}{6-d}}+\frac{L_2K}{\varepsilon}\Big{)}
 \|\frac{e^{n+1}+e^{n}}{2}\|$.
 Taking
 $\tau \leq 1/\Big{(}4B^{2}+ 2\frac{L^{2}}{ \varepsilon^{2}
 } +\frac{5L}{4\varepsilon}+\frac{3}{2\varepsilon} \Big{)}
 $,
 if $Q^{n+1}$ is uniformly bounded by constant
 $\frac{\varepsilon^{2}}{2}$, $G^{n+1}$ is uniformly bounded
 by constant $\frac{1}{2}$,
 then by Gronwall inequality, we get the finer error estimate (\ref{error-2}).

 We prove this by induction. Assuming that the finer
 estimate (\ref{error-2}) holds for all first $N$ time
 steps, the coarse estimate \eqref{cet2-10} leads to
 \begin{equation}\label{t8}
   \begin{split}
     &\|e^{N+1}\|^{2} +2\tau \varepsilon\|\nabla
     \frac{e^{N+1}+e^{N}}{2}\|^{2} +A\tau^{2}\|e^{N+1}\|^{2}
     +B\tau \|e^{N+1}\|^{2}\\
     \lesssim& \varepsilon^{-\max\{4\sigma_1+11,
       \sigma_{0}\}}\tau^{4}.
   \end{split}
 \end{equation}
 Then, if 
 $\tau \lesssim \varepsilon^{\frac{1}{3}\max\{4\sigma_1+11,
 	\sigma_{0}\}+\frac{5}{3} -\frac{2}{d}}$,
 we have
  \begin{align}\label{t9-1}
   Q^{N+1} \lesssim
   \varepsilon^{\frac{3}{d}}\varepsilon^{-\frac{1}{2}\max\{4\sigma_1+11, \sigma_{0}\}-\frac{1}{2}}\tau^{\frac{3}{2}}
   \lesssim \frac{\varepsilon^{2}}{2}.
 \end{align}
 If $\tau \lesssim\varepsilon^{\frac{1}{4}\max\{4\sigma_1+11,
 	\sigma_{0}\}+ \frac{9}{2(6-d)}}$,
 we have
 \begin{align}\label{t10-1}
  G^{N+1}
  \lesssim  \varepsilon^{-\frac{9}{6-d}}\varepsilon^{-\frac{1}{2}\max\{4\sigma_1+11, \sigma_{0}\}}\tau^{2}
  \lesssim \frac{1}{2}.
\end{align}
By taking $\tau$ satisfies inequality \eqref{error-1}, we get the finer error estimate for $N+1$ step, and
the proof is completed by mathematical induction.  \hfill
\end{proof}


\begin{remark}
  Theorem \ref{CNerror} and \ref{BDFerror} are valid for the
  special cases i) $A=0$, ii) $B=0$, iii) both $A=0$ and
  $B=0$, since the condition \eqref{eq:BDF:ABcond} and
  \eqref{accn2} are not used in the proof. On the other
  hand, in Theorem \ref{CNerror} and \ref{BDFerror}, the
  step size need be smaller than $\varepsilon^2$ to
  guarantee the convergence, which is much stronger than the
  requirement for the {unstabilized} schemes {(i.e. the case $A=B=0$)} to be 
  stable.
\end{remark}
 
 \yu{
 \begin{remark}\label{rmk:errest}
 	The proofs of \ref{BDFerror} and Theorem \ref{CNerror} 
 	are inspired by the works \cite{feng_error_2004}, \cite{kessler_posteriori_2004}, \cite{feng_analysis_2015} and \cite{feng_analysis_2016} for first order convex splitting schemes. 
 	The main difference is that we use a mathematical induction to
 	handle high order terms come from the $L^3$ term, while  
 	a generalized Gronwall lemma is used in \cite{feng_analysis_2015}, \cite{feng_analysis_2016}, and a continuation argument is used in 
 	\cite{kessler_posteriori_2004}.
 \end{remark}
}

 \new{
	\begin{remark}\label{rmk:gamma}
		For the case that $\gamma=O(1/\varepsilon)$, 
		we can get similar second order convergence results with the constant 
		does not depend on $1/\varepsilon$ exponentially for both SL-BDF2 and 
		SL-CN schemes. Take the SL-BDF2 scheme 
		as an example. By using a Cauchy inequality with $\varepsilon$, one can 
		put an $\varepsilon$ in front of the $\|e^{n+1}\|^2$ terms in  
		\eqref{BDF6} \eqref{BDF7}, \eqref{BDF8}, \eqref{J4} and \eqref{J6}. 
		Then, 
		by replacing the factor 
		$(1-\varepsilon)$ in \eqref{spectrum1BDF} with $1-\varepsilon^2$, and 
		multiplying \eqref{BDF12} by $\varepsilon$, we can get an estimate
		similar to \eqref{finer-error} for time steps small enough, but 
		the exponential factor now scales like 
		$\exp(O(\varepsilon)T)$.
	\end{remark}
}

\section{Implementation and numerical results}

In this section, we numerically verify our schemes are
second order accurate in time and energy stable.

We use the commonly used double-well potential
$F(\phi)=\frac{1}{4}(\phi^{2}-1)^{2}$. Since the exact
solution satisfies the maximum principle $|\phi|\leq 1$, it
is a common practice to modify $F(\phi)$ to have a quadratic
growth for $|\phi|>1$, such that a global Lipschitz
condition is satisfied
(cf. e.g. \cite{shen_numerical_2010},\cite{condette_spectral_2011}). To
get a $C^4$ smooth double-well potential with quadratic
growth, we introduce
$\tilde{F}(\phi) \in C^{\infty}(\mathbf{R})$ as a smooth
mollification of
\begin{equation}\label{efnew}
\hat{F}(\phi)=
\begin{cases}
  \frac{11}{2}(\phi-2)^{2}+6(\phi-2)+\frac94, &\phi>2, \\
  \frac{1}{4 }(\phi^{2}-1)^{2}, &\phi\in [-2,2], \\
  \frac{11}{2}(\phi+2)^{2}+6(\phi+2)+\frac94, &\phi<-2.
\end{cases}
\end{equation}
with a mollification parameter much smaller than 1, to
replace $F(\phi)$. Note that the truncation points $-2$ and
$2$ used here are for convenience only. Other values outside
of region $[-1,1]$ can be used as well.  For simplicity, we
still denote the modified function $\tilde{F}$ by $F$.

\subsection{Space discrete and implementation}
To test the numerical scheme, we solve \eqref{eq:AC} in a
2-dimensional domain $\Omega=[-1,1]^2$ and a 3-dimensional domain
$\Omega=[-1,1]^3$. We use a Legendre
Galerkin method similar as in \cite{shen_efficient_2015,
  yu_numerical_2017} for spatial discretization. For example, we
define
\[
V_M = \mbox{span}\{\,\varphi_k(x)\varphi_j(y) \varphi_i(z),\
k,j,i=0,\ldots, M-1\,\}\in H^1(\Omega),
\]
as Galerkin approximation space for $\phi^{n+1}$ in 3-dimensional case.
Here
$\varphi_0(x) = L_0(x); \varphi_1(x)=L_1(x);
\varphi_k(x)=L_k(x)-L_{k+2}(x), k=2,\ldots, M\!-\!1 $.
$L_k(x)$ denotes the Legendre polynomial of degree $k$.
Then the full discretized form for the SL-BDF2 scheme reads:

Find $(\phi^{n+1}, \mu^{n+1}) \in (V_M)^{2}$ such that
\begin{equation}\label{eq:bdf2:fulldis1}
\begin{split}
  \frac{1}{2\tau \gamma}(3\phi^{n+1}-4\phi^{n}+\phi^{n-1},
  \omega) ={} &-\varepsilon (\nabla \phi^{n+1},\nabla
  \varphi)
  -\frac{1}{\varepsilon}(f(2\phi^{n}-\phi^{n-1}),\varphi)\\
  &-A\tau ( \delta_{t}\phi^{n+1}, \varphi)
  -B(\delta_{tt}\phi^{n+1},\varphi), \quad \forall\, \varphi
  \in V_M.
\end{split}
\end{equation}
This is a linear system with constant coefficients for
$\phi^{n+1}$, which can be efficiently solved.  We use a
spectral transform with double quadrature points to
eliminate the aliasing error and efficiently evaluate the
integration $(f(2\phi^n-\phi^{n-1}),\varphi)$ in equation
\eqref{eq:bdf2:fulldis1}.

Given $\phi^0$, to start the second order schemes, we use
following first order stabilized scheme with smaller time steps to generate
$\phi^1$, 
 \begin{equation}\label{phi1}
   \frac{\varphi^{n+1}-\varphi^{n}}{\tau \gamma} =\varepsilon \Delta \varphi^{n+1}
   -\frac{1}{\varepsilon}f(\varphi^{n})-A \delta_{t}\varphi^{n+1},\
   n=0,\ldots,m-1,
 \end{equation}
where $\varphi^0=\phi^0$, $\phi^1=\varphi^m$.

 We take $\varepsilon=0.075$ and $M=63$ and use 
random initial values $\phi_0$ to test the
 stability and accuracy of the proposed schemes. For the
 3-dimensional case, the initial value is given as
  $\{\phi_0(x_i,y_j,z_k) \}\in\,
  {\bf{R}}^{2M\times2M\times2M}$
  with $x_i, y_j, z_k$ are tensor product Legendre-Gauss
  quadrature points and $\phi_0(x_i,y_j,z_k)$ is a uniformly
  distributed random number between $-1$ and $1$ (shown in
  the first picture of Fig. \ref{fig1});

\begin{figure}[H]
	\centering
	\includegraphics[width=0.32\textwidth]{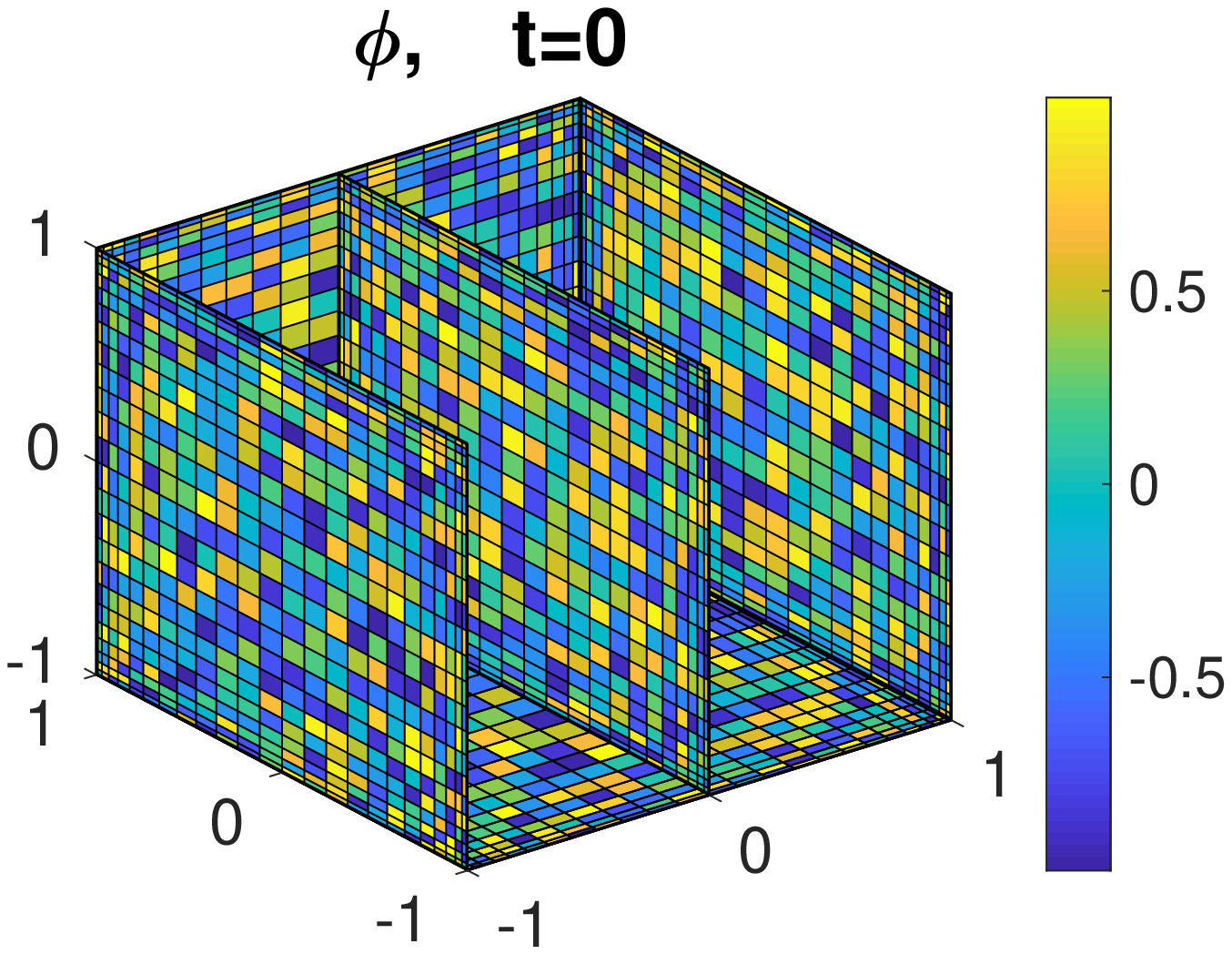}
	\includegraphics[width=0.32\textwidth]{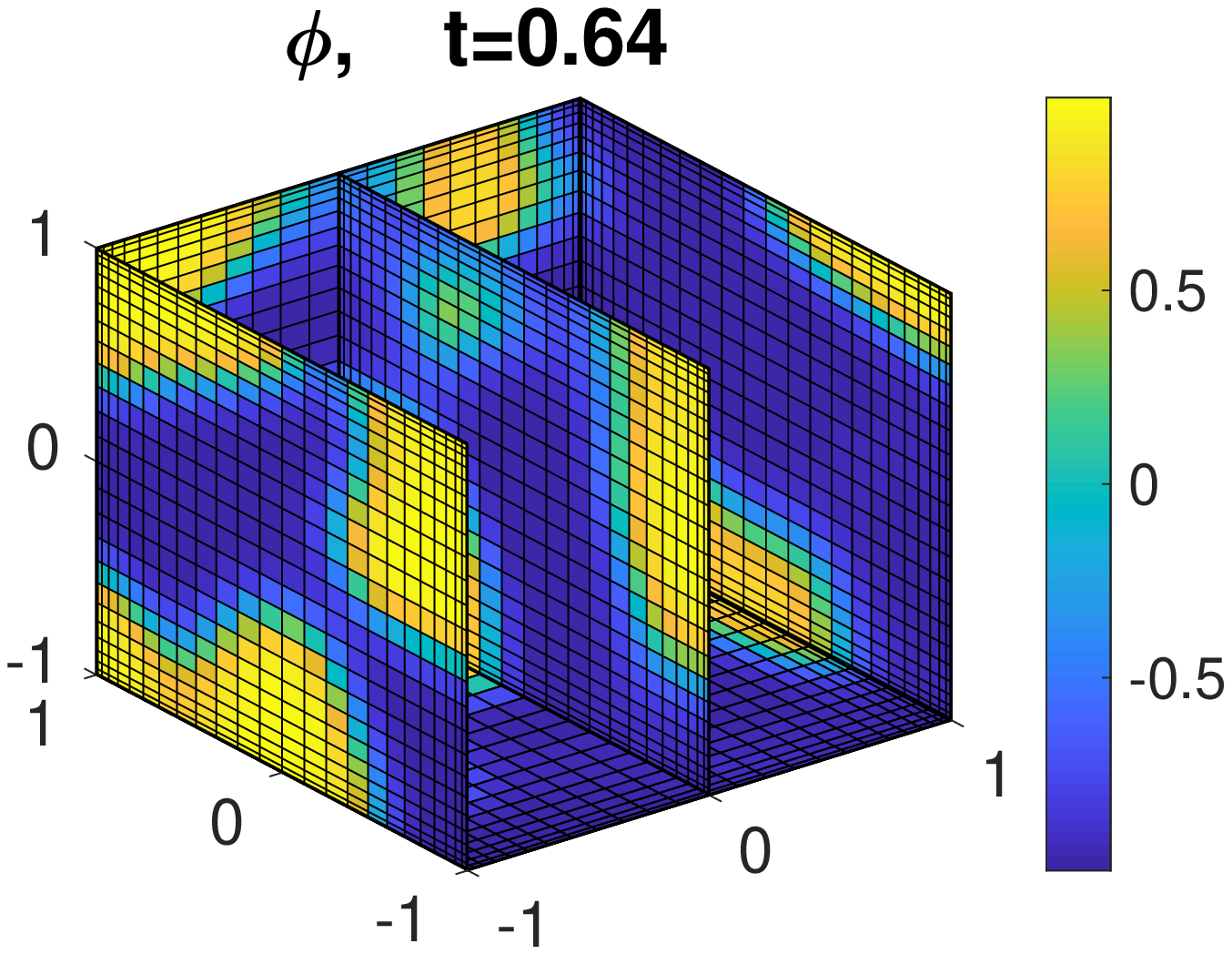}
	\includegraphics[width=0.32\textwidth]{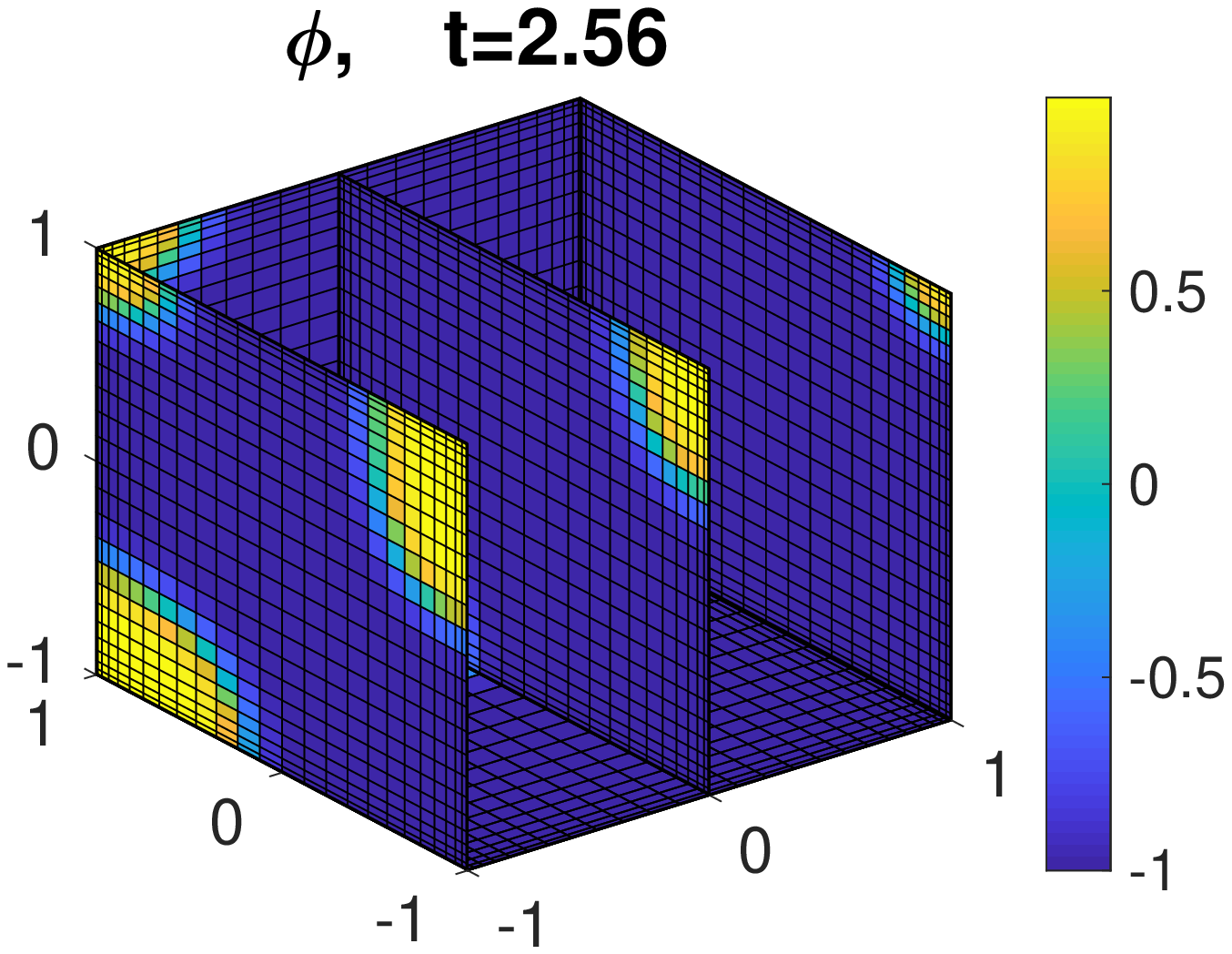}
  	\caption{The view of five different slices of the initial value $\phi_0$ and the corresponding solution. 
  		First) the random initial values $\phi_0$; 
  		Second) $\phi_1$, the solution at $t=0.64$ of the Allen-Cahn equation with initial value $\phi_0$;  
  		Third) the solution at $t=2.56$ of the
  		the Allen-Cahn equation with initial value $\phi_0$.  Equation parameter $\gamma=1$, $\varepsilon=0.075$. }
  	\label{fig1}
\end{figure}

\subsection{Stability results}

We present the required minimum values of $A$ (resp. $B$)
with different $B$ (resp. $A$) and $\tau$ values for stably
solving the Allen-Cahn equation \eqref{eq:AC} in
3-dimensional case in Table \ref{tstab3d:B}
(resp. \ref{tstab3d:A}). Here by ``stably solving", we mean
the energy keep dissipating in first 1024 time steps. The
corresponding 2-dimensional results are given in Table
\ref{tstab2d:B} and \ref{tstab2d:A}.  Those results are
obtained by using initial value $\phi_0$, the results for
the cases taking initial value $\phi_1$ are similar.  From
those tables, we see that the maximum required A values are
of order $O(\varepsilon^2)$ and the maximum required B
values are of order $O(\varepsilon)$. For $\tau$ small
enough, both schemes are stable with $A=0$ and $B=0$. This
is consistent to our analysis result.  On the other hand
side, by using a nonzero $B$, e.g. $B=10$, the requirement
for a large $A$ will be dramatically reduced.

To check the energy dissipation property, we present in
Figure \ref{fig:2Energy} the log-log plot of the energy
versus time for two schemes using different time step-sizes.
We see the energy decaying property is maintained.

\begin{table}[htpb]
 \begin{tabular}{|c|c|c|c|c|c|c|c|c|}
   \hline
   \multirow{3}{*}{$\tau$ }
   &  \multicolumn{3}{|c|}{SL-BDF2}
   &  \multicolumn{3}{|c|}{SL-CN}
   \\ \cline{2-7}  & $B=0$ & $B=5$ & $B=10$  & $B=0$ & $B=5$ & $B=10$ \\ \hline
   10   & 3   & 2   & 1  & 3  &  2  & 1  \\ \hline
   1    & 30  & 20  & 10 & 30 &  20 & 10  \\ \hline
   0.1  & 100 &  0  &  0 & 200& 100  & 0  \\ \hline
   0.01 & 0   & 0   & 0  & 0  & 0  & 0  \\ \hline
\end{tabular}
 \centering
 \caption{The minimum values of $A$ (only values  
 	$\{ 0,1, \ldots,5\}\cup \{10, 15, \ldots,50\}\cup\{100, 150, \ldots,500 \}$ 
 	are tested for A)
   to make SL-BDF2 and SL-CN scheme stable when $B$ and $\tau$ taking different
   values. The results are from 3-dimensional simulations with $\gamma=1$, $\varepsilon=0.075$.}
\label{tstab3d:B}
\end{table}

\begin{table}[htpb]
 \begin{tabular}{|c|c|c|c|c|c|c|c|c|}
   \hline
   \multirow{3}{*}{$\tau$ }
   &  \multicolumn{3}{|c|}{SL-BDF2}
   &  \multicolumn{3}{|c|}{SL-CN}
   \\ \cline{2-7}  & $A=0$ & $A=10$ & $A=20$ & $A=0$ & $A=10$ & $A=20$\\ \hline
   10   & 20 & 0   &  0  &   30  &  0  & 0  \\ \hline
   1    & 20 & 10  &  5  &   30  &  10  & 3 \\ \hline
   0.1  & 10 & 10  &  10  &   10  &  10  & 10  \\ \hline
   0.01 & 0  & 0   &  0  &   0  &  0  &  0 \\ \hline
 \end{tabular}
 \centering
 \caption{The minimum values of $B$ (only values 
   $\{ 0,1, \ldots, 5\}\cup \{10,15, \ldots, 50 \}$ 
   are tested for B)
   to make scheme SL-BDF2 and SL-CN stable when  $A$ and $\tau$ taking different
   values. The results are from 3-dimensional simulations with $\gamma=1$, $\varepsilon=0.075$.}
\label{tstab3d:A}
\end{table}

\begin{table}[htpb]
 \begin{tabular}{|c|c|c|c|c|c|c|c|c|}
   \hline
   \multirow{3}{*}{$\tau$ }
   &  \multicolumn{3}{|c|}{SL-BDF2}
   &  \multicolumn{3}{|c|}{SL-CN}
   \\ \cline{2-7}  & $B=0$ & $B=5$ & $B=10$  & $B=0$ & $B=5$ & $B=10$ \\ \hline
   10   & 3   & 2   & 1  & 4  &  2  & 1  \\ \hline
   1    & 35  & 20  & 20 & 30 &  20 & 10  \\ \hline
   0.1  & 100 &  10 &  0 & 200& 100  & 0  \\ \hline
   0.01 & 0   & 0   & 0  & 0  & 0  & 0  \\ \hline
 \end{tabular}
 \centering
 \caption{The minimum values of $A$ (only values  
 	$\{ 0,1, \ldots,5\}\cup \{10, 15, \ldots,50\}\cup\{100, 150, \ldots,500 \}$ 
 	are tested for A)
   to make SL-BDF2 and SL-CN scheme stable when $B$ and $\tau$ taking different
   values. The results are from 2-dimensional simulations with $\gamma=1$, $\varepsilon=0.075$.}
 \label{tstab2d:B}
\end{table}

\begin{table}[htpb]
  \begin{tabular}{|c|c|c|c|c|c|c|c|c|}
    \hline
    \multirow{3}{*}{$\tau$ }
    &  \multicolumn{3}{|c|}{SL-BDF2}
    &  \multicolumn{3}{|c|}{SL-CN}
    \\ \cline{2-7}  & $A=0$ & $A=10$ & $A=20$ & $A=0$ & $A=10$ & $A=20$\\ \hline
    10   & 20 & 0   &  0  &   20  &  0  & 0  \\ \hline
    1    & 20 & 15  &  0  &   20  &  10  & 3 \\ \hline
    0.1  & 10 & 5  &  2  &   10  &  10  & 10  \\ \hline
    0.01 & 0  & 0   &  0  &   0  &  0  &  0 \\ \hline
  \end{tabular}
  \centering
  \caption{The minimum values of $B$ (only values 
    $\{ 0,1, \ldots, 5\}\cup \{10,15, \ldots, 50 \}$ 
    are tested for B)
    to make scheme SL-BDF2 and SL-CN stable when  $A$ and $\tau$ taking
    different values. The results are from 2-dimensional simulations with $\gamma=1$, $\varepsilon=0.075$.}
  \label{tstab2d:A}
\end{table}

\begin{figure}[htpb]
	\centering
	\includegraphics[width=0.48\textwidth]{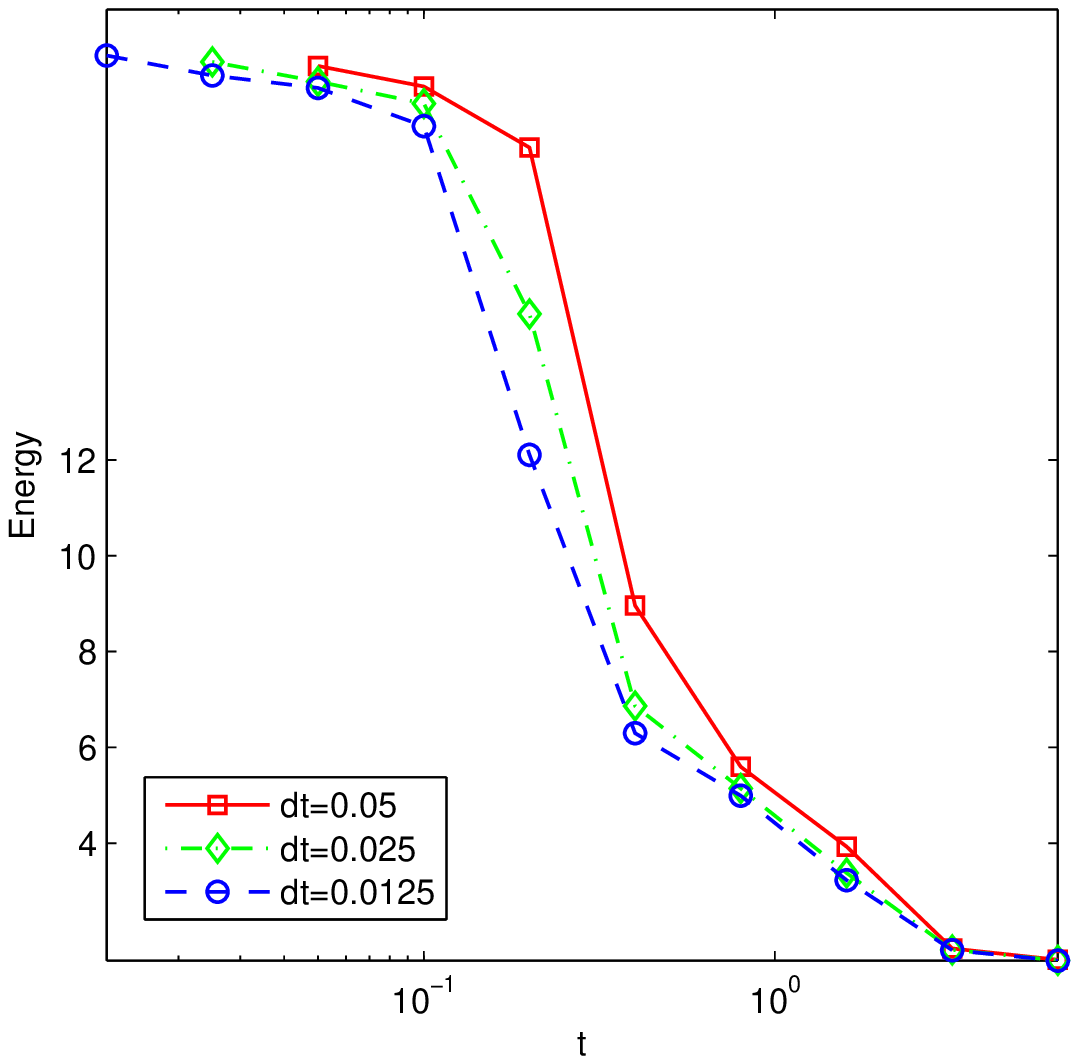}
	\includegraphics[width=0.48\textwidth]{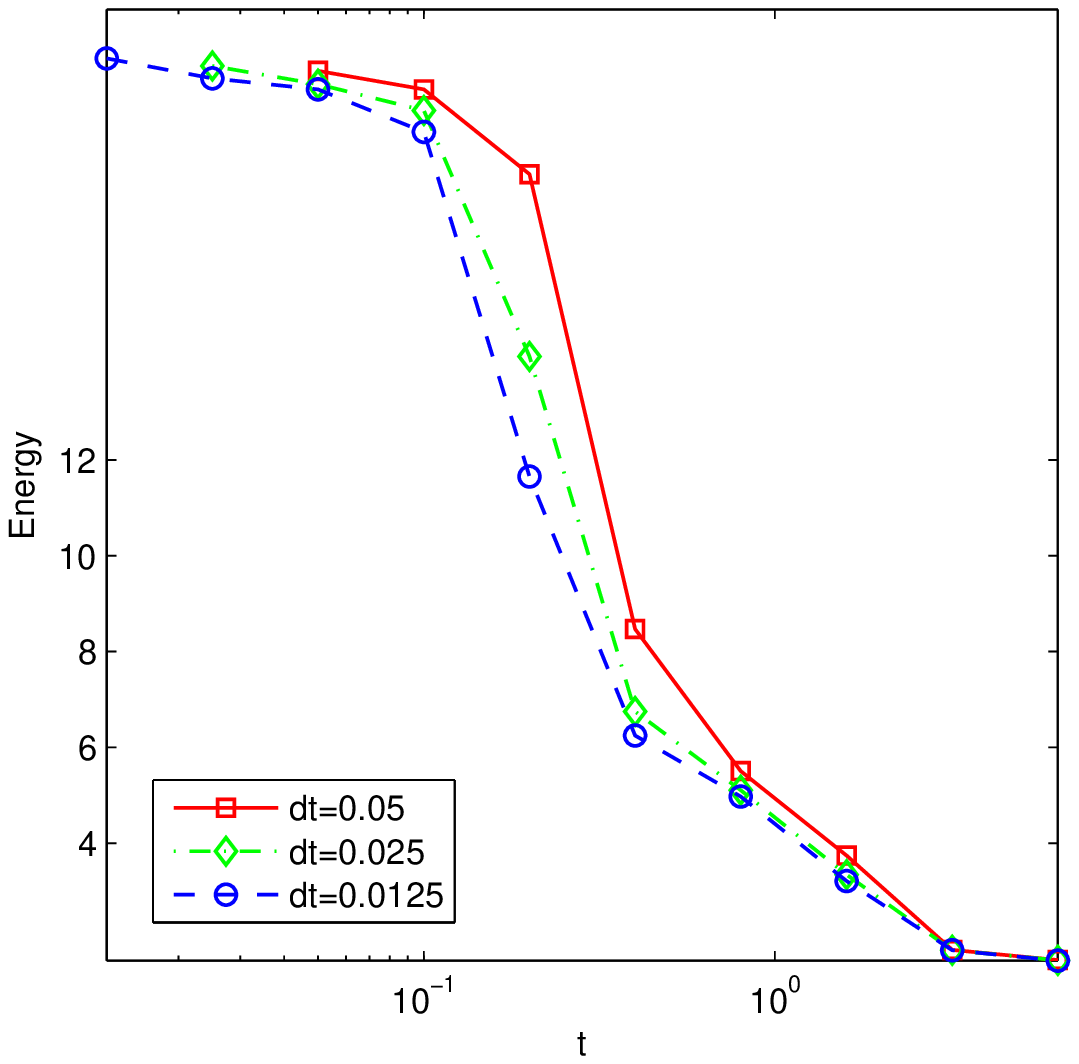}
	\caption{The discrete energy dissipation of the two
      schemes solving the Allen-Cahn equation with initial
      value $\phi_1$, and parameter $\gamma=1$,
      $\varepsilon=0.075$.  Left) result of SL-BDF2 scheme;
      Right) result of SL-CN scheme.}
	\label{fig:2Energy}
\end{figure}

\subsection{Accuracy results}
We take initial value $\phi_1$ (see the second plot in Fig.~\ref{fig1}) for \eqref{eq:AC} to test the
accuracy of the two schemes in a 2-dimensional domain
$\Omega=[-1,1]\times[-1,1]$.  The Allen-Cahn equation with
the time relaxation parameter $\gamma=0.5$ are solved from
$t=0$ to $T=1.28$. To calculate the numerical error, we use
the numerical result generated using $\tau=10^{-4}$ as a
reference of exact solution.  The results are given in Table
\ref{tbl:conv:SL-BDF2} and Table \ref{tbl:conv:SL-CN}. We
see that the schemes are second order accuracy in both $L^2$
and $H^1$ norm.

\begin{table}[htpb]
  \begin{tabular}{|c|c|c|c|c|c|c|}
    \hline
    $\tau$ &  $L^2$ Error & Order & $H^1$ Error & Order\\ \hline
    0.032 & 3.21 E-01 &       & 3.41 & \\ \hline
    0.016 &  1.19E-01 & 1.425 & 1.49 &  1.189\\ \hline
    8E-3 &  2.90E-02 & 2.043 & 3.68E-01 & 2.021 \\ \hline
    4E-3 &  7.15E-03 & 2.019 & 8.91E-02 & 2.047 \\ \hline
    2E-3 &  1.82E-03 & 1.976 & 2.26E-02 & 1.980 \\ \hline
    1E-3 &  4.50E-04 & 2.016 & 5.58E-03 & 2.016 \\ \hline
  \end{tabular}
  \centering
  \caption{The convergence of the SL-BDF2 scheme
    with $B=5$, $A=10$
    for the Allen-Cahn equation with initial value $\phi_1$, parameter $\gamma=0.5, \varepsilon=0.05$.
    The errors are calculated at $T=1.28$.}
  \label{tbl:conv:SL-BDF2}
\end{table}

\begin{table}[htpb]
  \begin{tabular}{|c|c|c|c|c|c|c|}
    \hline
    $\tau$ &  $L^2$ Error & Order & $H^1$ Error & Order\\ \hline
    0.032 &  2.84E-01 &      & 3.11 &  \\ \hline
    0.016 &  9.71E-02 &1.548 & 1.25 &     1.320    \\ \hline
    8E-3 &  2.19E-02 & 2.152 & 2.76E-01 & 2.178 \\ \hline
    4E-3 &  5.33E-03 & 2.035 & 6.63E-02 & 2.055 \\ \hline
    2E-3 &  1.34E-03 & 1.997 & 1.66E-02 & 2.000 \\ \hline
    1E-3 &  3.27E-04 & 2.031 & 4.06E-03 & 2.032 \\ \hline
\end{tabular}
 \centering
 \caption{The convergence of the SL-CN scheme
   with 
   $B=5$, $A=10$
   for the Allen-Cahn equation with initial value $\phi_1$, parameter $\gamma=0.5, \varepsilon=0.05$.
   The errors are calculated at $T=1.28$.}
\label{tbl:conv:SL-CN}
\end{table}

\section{Conclusions}

We proposed two second order stabilized linear schemes,
namely the SL-BDF2 and the SL-CN scheme, for the phase-field
Allen-Cahn equation. In both schemes, the nonlinear bulk
forces are treated explicitly with two additional linear
stabilization terms to guarantee unconditionally energy
stable.  The schemes lead to linear systems with constant
coefficients thus can be efficiently solved. An optimal
error estimate is given by using a spectrum argument to
remove the exponential dependence on $1/\varepsilon$.  The
error analysis also holds for the special cases when one of
the stabilization constants or both of them take zero
values.  Numerical results verified the stability and
accuracy of the proposed schemes.

\section*{Acknowledgment}
This work is partially supported by NNSFC Grant 11771439, 11371358 and Major Program of NNSFC under Grant 91530322.
The authors would like to thank Prof. Jie Shen for helpful discussions.

\bibliographystyle{siam}  
\bibliography{LSS2}

%
%

\end{document}